\documentclass[11pt]{article}

\usepackage{amsmath, amssymb, amsthm}
\usepackage[utf8]{inputenc}
\usepackage[T1]{fontenc}
\usepackage{geometry}
\usepackage{hyperref}
\usepackage{mathtools}

\newtheorem{theorem}{Theorem}[section]
\newtheorem{lemma}[theorem]{Lemma}
\newtheorem{proposition}[theorem]{Proposition}
\newtheorem{corollary}[theorem]{Corollary}
\theoremstyle{definition}
\newtheorem{definition}[theorem]{Definition}

\theoremstyle{remark}
\newtheorem{remark}[theorem]{Remark}

\newcommand{\C}{\mathbb{C}}

\newcommand{\Z}{\mathbb{Z}}
\newcommand{\D}{\mathbb{D}}
\newcommand{\Acal}{\mathcal{A}}
\newcommand{\Bcal}{\mathcal{B}}
\newcommand{\Mcal}{\mathcal{M}}
\newcommand{\Fcal}{\mathcal{F}}
\newcommand{\fa}{\mathfrak{a}}
\newcommand{\fb}{\mathfrak{b}}
\newcommand{\ff}{\mathfrak{f}}

\title{The Pseudo-Analytic Charge}
\author{Daniel Alay\'on-Solarz\thanks{\texttt{danieldaniel@gmail.com}}}
\date{July 2026}

\begin{document}

\maketitle

\begin{abstract}
The framed Beltrami--Vekua equation
$\Phi(w_{\bar z} - \mu w_z) + \Psi(\overline{w_z} - \mu\overline{w_{\bar z}}) + \mathfrak{a}w + \mathfrak{b}\bar w = \mathfrak{f}$,
with $|\mu|<1$ and $|\Phi|>|\Psi|$, carries a numerator field
$N = \Phi\mathfrak{b} - \Psi\mathfrak{a} - W_L(\Phi,\Psi)$ whose
weighted modulus integrates to the pseudo-analytic mass. This paper
extracts the integer carried by the same field. When the zero set of
$N$ is compactly contained in a bounded simply connected domain, the
winding number of $N$ along any enclosing curve --- the
\emph{pseudo-analytic charge} $n \in \mathbb{Z}$ --- is invariant under
every recombination $w = \varphi w' + \psi\bar w'$ of the unknown,
every scaling of the equation, and every orientation-preserving $C^1$
change of variables: recombinations multiply $N$ by the positive factor
$|\varphi|^2 - |\psi|^2$, so their invariance is exact, while on
multiply connected domains the other two actions fix the component
charges only in $\mathbb{Z}/2\mathbb{Z}$ and the total charge exactly.
The charge is a Brouwer degree: it localizes at the zeros of $N$,
vortices which no action of the class creates or destroys; an isolated
vortex persists under perturbation of the data precisely when its
local charge is non-zero. It
involves the Beltrami coefficient only through the $L$-Wronskian of the
frame, and is $\mu$-independent wherever
$W_\partial(\Phi,\Psi) \equiv 0$ --- in particular at the trivial
frame, where $N = \mathcal{B}$ and the charge is the gauge-invariant
winding of the coefficient of the Beltrami--Vekua equation. Mass and
charge are independent: every pair in $(0,\infty)\times\mathbb{Z}$ is
realized.
\end{abstract}

\section{Introduction}\label{sec:intro}

A first-order real planar elliptic system normalizes, by pointwise
algebra alone, to a \emph{framed Beltrami--Vekua equation}
\begin{equation}\label{eq:framed}
\Phi\,(w_{\bar z} - \mu\, w_z) \;+\; \Psi\,(\overline{w_z} - \mu\,\overline{w_{\bar z}}) \;+\; \fa\, w \;+\; \fb\, \bar w \;=\; \ff,
\qquad |\mu| < 1, \quad |\Phi| > |\Psi|,
\end{equation}
on a domain $\Omega \subset \C$ \cite{framed}. The class \eqref{eq:framed}
is closed under the recombinations of the unknown
$w = \varphi w' + \psi \bar w'$ with $|\varphi| > |\psi|$, under the
scalings of the equation, and under orientation-preserving changes of
variables, with closed transformation laws for all the data. Out of those
laws, when the frame $(\varphi, \psi)$ is smooth, \cite{framed} extracted a single scalar field, the
\emph{numerator field}
\begin{equation}\label{eq:N-def}
N \;:=\; \Phi\,\fb \;-\; \Psi\,\fa \;-\; W_L(\Phi, \Psi),
\qquad W_L(\Phi, \Psi) := \Phi\, L\Psi - \Psi\, L\Phi,
\qquad L := \bar\partial - \mu\,\partial,
\end{equation}
whose modulus, correctly weighted, is an invariant density: the 2-form
$\Theta = |N|^2\,(|\Phi|^2 - |\Psi|^2)^{-2}(1 - |\mu|^2)^{-1}\,dx\,dy$
is unchanged by recombinations and scalings and covariant under changes of
variables, so that the \emph{pseudo-analytic mass}
$\Mcal = \int_\Omega \Theta$ is an invariant of the equivalence class
\cite{mass, framed}.

The mass is a metric invariant: it consumes the modulus of $N$,
weighted by the frame determinant and by the conformal factor of $\mu$,
and nothing of $\arg N$ enters it. The present paper
extracts the topological content of the same field --- the invariant
carried by $\arg N$ alone. We define the \emph{pseudo-analytic charge} of the
equation \eqref{eq:framed} as the winding number
\begin{equation}\label{eq:charge-def}
n \;:=\; \frac{1}{2\pi} \oint_{\partial\Omega} d\,\arg N \;\in\; \Z,
\end{equation}
whenever $N$ is continuous on $\overline\Omega$ and non-vanishing on
$\partial\Omega$, with $\Omega$ bounded and simply connected.

That $n$ is an invariant is a consequence of the exactness of the
transformation laws of $N$ established in \cite{framed}. Under the
recombination $w = \varphi w' + \psi \bar w'$ the numerator field obeys
\begin{equation}\label{eq:N-law-intro}
N' \;=\; \bigl(|\varphi|^2 - |\psi|^2\bigr)\, N,
\end{equation}
a \emph{real positive} multiple: the argument of $N$ does not move at
all, and the charge is invariant on the nose. Under a scaling
$E \mapsto cE$ one has $N' = c^2 N$, and under an orientation-preserving
change of variables $N$ pulls back with the single non-vanishing weight
$\rho^{-1}$; on a simply connected domain neither factor can wind, and
the charge is unchanged. The charge, moreover, needs no weight in the Beltrami coefficient: the
conformal factor $(1 - |\mu|^2)^{-1}$ of the mass density has no
counterpart in \eqref{eq:charge-def}. The coefficient $\mu$ does enter
the field itself, through the $L$-Wronskian
$W_L = W_{\bar\partial} - \mu\, W_{\partial}$, and hence enters the
charge exactly when the frame's $\partial$-Wronskian survives; on the
slices with $W_\partial(\Phi, \Psi) \equiv 0$ --- the trivial frame
among them --- the charge is independent of $\mu$
(Theorem~\ref{thm:invariance}, Remark~\ref{rem:mu-real}). Mass and charge thus read the two coordinates of the field $N$ ---
the mass its weighted modulus, the charge its winding argument ---, and we show by explicit examples that they are
independent: every pair $(\Mcal, n) \in (0, \infty) \times \Z$ is
realized. There is no Bogomolny bound tying the energy to the vortex
number here, in contrast with the Ginzburg--Landau functional whose
vocabulary the charge otherwise borrows.

The name \emph{vortex} is earned in the usual way: the charge localizes.
If $N$ vanishes only on a discrete set, $n$ is the algebraic sum of the
local windings of $N$ around its zeros, and a non-zero charge forces $N$
to vanish somewhere in the interior. A non-zero charge is accordingly an
obstruction to normalizing $N$ real and positive on all of $\Omega$ by
the group actions; the vanishing of the charge is necessary for such a
normalization but not sufficient, the full
account of positive normalization --- and of an analytic obstruction that
can survive at a vortex when the charge vanishes --- being deferred to
Proposition~\ref{prop:normalization} and Remark~\ref{rem:oscillation}.
What the charge controls exactly is the winding of $\arg N$ along an
enclosing curve. One caveat is owed at the outset: $N$ is \emph{data}, not a
solution of an elliptic equation, so no similarity principle governs its
zero set --- the zeros need not be isolated, and no local index need be
positive. The boundary winding \eqref{eq:charge-def} is the only count
that is intrinsically meaningful, and it is the one we take.

The framed formulation buys exactness at a price, and the price is
regularity: the numerator field spends a derivative on the frame ---
the $L$-Wronskian does not exist below $C^1$ (more precisely, below a
$C^1$ frame ratio, per the scaling convention of
Section~\ref{sec:framed}) --- whereas the slice winds the merely
continuous coefficient $\Bcal$ at no differentiability cost. What the
expenditure purchases is the statement becoming clean. 
On the trivial-frame slice
$(\Phi, \Psi) = (1, 0)$ --- the Beltrami--Vekua equation
$w_{\bar z} - \mu w_z + \Acal w + \Bcal \bar w = \Fcal$ of \cite{mass}
--- the numerator field reduces to $N = \Bcal$, and the charge becomes
the boundary winding of the coefficient $\Bcal$. But the substitutions
preserving that slice are the gauges $w = \varphi w'$, under which
$\Bcal \mapsto \Bcal\,\bar\varphi/\varphi$: the argument moves by
$-2\arg\varphi$, and invariance of the winding requires an auxiliary
degree argument, which moreover degenerates to a mod-$2$ statement on
multiply connected domains. In the framed class the positive factor
\eqref{eq:N-law-intro} makes the invariance under the full substitution
group exact and hypothesis-free, and confines whatever topology remains
to the scalings and the changes of variables, where it is transparent.
The charge is also computable \emph{pre-uniformization}: like the mass,
it is read directly off the data $(\mu, \Phi, \Psi, \fa, \fb)$ of any
representative, with no normal form to reach first.

The paper is organized as follows. Section~\ref{sec:framed} recalls the
framed class and the transformation laws of $N$ from \cite{framed}.
Section~\ref{sec:charge} defines the charge and proves its invariance,
with the precise statement on multiply connected domains.
Section~\ref{sec:vortices} localizes the charge at the zeros of $N$ and
identifies it as the obstruction to a positive normalization.
Section~\ref{sec:trivial} restricts to the trivial-frame slice and
recovers the winding of $\Bcal$. Section~\ref{sec:independence} proves
the independence of mass and charge. Section~\ref{sec:discussion} relates
the charge to the index of Vekua's Riemann--Hilbert problem and records
what survives at measurable regularity.

\section{The framed Beltrami--Vekua class}\label{sec:framed}

This section fixes notation and recalls from \cite{framed} the three
group actions on the class \eqref{eq:framed} and the exact transformation
laws of the numerator field. The laws at $C^1$ regularity are proved in
\cite{framed}; new here are only the two reading conventions --- the
definition of the numerator field of a $C^0$-scaled equation, and the
$\rho^{-1}$ reading of the pullback --- which extend those laws to the
uses this paper makes of them..

Throughout, $\Omega \subset \C$ is a bounded domain, simply connected
until Remark~\ref{rem:multiply} says otherwise. The regularity is that of
\cite{framed}: the frame $(\Phi, \Psi)$ and the substitutions are $C^1$;
the Beltrami coefficient $\mu$ is continuous with $|\mu| < 1$ on
$\overline\Omega$; the lower-order data $\fa, \fb, \ff$ are $C^0$. The
Beltrami coefficient $\mu$ is never differentiated: it enters only as a bounded
multiplier inside the pair of first-order operators
\[
L \;:=\; \bar\partial - \mu\,\partial, \qquad
M \;:=\; \partial - \bar\mu\,\bar\partial,
\]
which intertwine through conjugation, $L\bar g = \overline{Mg}$.

Three group actions preserve the class \eqref{eq:framed}.

\emph{Substitutions.} Under the recombination of the unknown
$w = \varphi w' + \psi \bar w'$, with $\varphi, \psi \in C^1(\Omega; \C)$
and $|\varphi| > |\psi|$ pointwise, the data transform by
\begin{equation}\label{eq:sub-law}
(\Phi, \Psi) \;\longmapsto\; \bigl(\Phi\varphi + \Psi\bar\psi,\; \Phi\psi + \Psi\bar\varphi\bigr),
\qquad \mu \;\longmapsto\; \mu,
\end{equation}
with the lower-order data picking up derivatives of the substitution
($\fa \mapsto \Phi\,L\varphi + \Psi\,\overline{M\psi} + \fa\varphi + \fb\bar\psi$
and its companion), and $\ff$ unchanged. The frame determinant is
multiplicative,
\begin{equation}\label{eq:det-mult}
|\Phi'|^2 - |\Psi'|^2 \;=\; \bigl(|\Phi|^2 - |\Psi|^2\bigr)\, D,
\qquad D \;:=\; |\varphi|^2 - |\psi|^2 \;>\; 0,
\end{equation}
so the frame condition $|\Phi| > |\Psi|$ is preserved.

\emph{Scalings.} The equation may be multiplied through:
$E \mapsto cE$ with $c \in C^0(\Omega; \C^*)$, sending
$(\Phi, \Psi, \fa, \fb, \ff) \mapsto c\,(\Phi, \Psi, \fa, \fb, \ff)$ and
fixing $\mu$. For $c \in C^1$ the scaled equation is again a $C^1$-framed
representative and the law \eqref{eq:N-scale} below is a computation.
For merely continuous $c$ the scaled frame leaves the $C^1$ class, and
the numerator field of the scaled equation is \emph{defined} through
any de-scaling: if $cE$ with $E$ $C^1$-framed, set $N_{cE} := c^2 N_E$.
This is well-defined --- if $c_1 E_1 = c_2 E_2$ then
$c_1/c_2 = \Phi_2/\Phi_1$ is a quotient of $C^1$ functions with
zero-free denominator, hence a $C^1$ scaling relating $E_1$ to $E_2$,
and the computed law gives $c_2^2 N_2 = c_1^2 N_1$ --- and it extends
\eqref{eq:N-def} to precisely the continuous-framed equations whose
frame ratio $\Psi/\Phi$ is $C^1$.

\emph{Changes of variables.} If the equation lives on $\Omega'$ and
$F : \Omega \to \Omega'$ is an orientation-preserving $C^1$
diffeomorphism with Jacobian $J = |F_z|^2 - |F_{\bar z}|^2 > 0$, then
$h = w \circ F$ satisfies a framed equation on $\Omega$ over the
pulled-back Beltrami coefficient $\tilde\mu$,
with frame $\rho\,(\Phi \circ F, \Psi \circ F)$ and lower-order data
$(\fa, \fb, \ff) \circ F$, where
\begin{equation}\label{eq:rho-def}
\rho \;:=\; \frac{F_z + (\mu \circ F)\,\overline{F_{\bar z}}}{J} \;\neq\; 0 .
\end{equation}
The weight $\rho$ is non-vanishing on $\Omega$ because
$|\mu \circ F|\,|F_{\bar z}| < |F_z|$; this is the only fact about $\rho$
the present paper uses beyond its formula.

The object of study is the \emph{numerator field} \eqref{eq:N-def},
assembled from the frame, its $L$-derivatives, and the lower-order data.
Its transformation laws are exact and closed:

\begin{proposition}[{\cite{framed}}, extended by the conventions above]\label{prop:N-laws}
Under the three actions above, the numerator field obeys
\begin{align}
\text{substitution:} \qquad & N' \;=\; D\, N, \qquad D = |\varphi|^2 - |\psi|^2 > 0, \label{eq:N-sub}\\
\text{scaling:} \qquad & N' \;=\; c^2\, N, \label{eq:N-scale}\\
\text{change of variables:} \qquad & \tilde N \;=\; \frac{N \circ F}{\rho}, \label{eq:N-diffeo}
\end{align}
where in \eqref{eq:N-diffeo} the numerator field of the pulled-back
equation is read off the $C^1$-framed representative obtained by the
scaling $c = \rho^{-1}$, any two such choices differing by
\eqref{eq:N-scale}.
\end{proposition}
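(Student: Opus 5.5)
The plan is to verify the three laws one at a time. The substitution law is the genuine computation. Scaling is a homogeneity check plus the definition. The change of variables reduces, after de-scaling by $\rho^{-1}$, to a chain rule for $L$.

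\emph{Substitution.} I write out the transformed lower-order data in full: $\fa' = \Phi\,L\varphi + \Psi\,\overline{M\psi} + \fa\varphi + \fb\bar\psi$ and, by the symmetric computation, $\fb' = \Phi\,L\psi + \Psi\,\overline{M\varphi} + \fa\psi + \fb\bar\varphi$. Both come from expanding $L(\varphi w' + \psi\bar w')$ and using $L\bar g = \overline{Mg}$. With $\Phi' = \Phi\varphi + \Psi\bar\psi$ and $\Psi' = \Phi\psi + \Psi\bar\varphi$, I expand $\Phi'\fb' - \Psi'\fa'$ and $W_L(\Phi',\Psi')$ by the Leibniz rule for $L$, and group the terms into three types.
\begin{itemize}
\item The zeroth-order terms give $(\Phi\fb - \Psi\fa)(|\varphi|^2 - |\psi|^2)$, since the mixed terms $\fa\varphi\psi$ and $\fb\bar\varphi\bar\psi$ cancel.
\item The terms with derivatives of the frame give $D\,W_L(\Phi,\Psi)$.
\item The terms with derivatives of $\varphi,\psi$ cancel pairwise: the $L\varphi, L\psi$ terms from $W_L$ against those from the data.
\end{itemize}
The delicate point is the conjugated terms $\overline{M\varphi}$ and $\overline{M\psi}$. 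In $W_L(\Phi',\Psi')$ these appear as $L\bar\varphi$ and $L\bar\psi$, and they match by the intertwining identity $L\bar g = \overline{Mg}$. I expect this bookkeeping to be the main obstacle. I would organize it by noting that $W_L$ is bilinear and antisymmetric, so $W_L(\Phi',\Psi')$ splits as
\[
\det\begin{pmatrix}\varphi & \bar\psi\\ \psi & \bar\varphi\end{pmatrix} W_L(\Phi,\Psi)
\]
plus terms in which $L$ falls on the substitution coefficients. Those terms are exactly what $\Phi'\fb' - \Psi'\fa'$ must absorb.

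\emph{Scaling.} For $c \in C^1$, the same bilinearity gives $W_L(c\Phi,c\Psi) = c^2 W_L(\Phi,\Psi)$, because the $Lc$ terms cancel by antisymmetry. The lower-order part $c\Phi\,c\fb - c\Psi\,c\fa$ is visibly homogeneous of degree $2$, so $N' = c^2 N$. For $c$ merely $C^0$, the law holds by the definition $N_{cE} := c^2 N_E$, whose well-definedness was checked in Section~\ref{sec:framed}.

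\emph{Change of variables.} I would pass to the representative obtained by the scaling $\rho^{-1}$. It has $C^1$ frame $(\Phi\circ F,\Psi\circ F)$ and lower-order data $(\fa\circ F,\fb\circ F)/\rho$. The key input is the chain rule
\[
\tilde L(g\circ F) = \rho^{-1}\,(Lg)\circ F,
\]
where $\tilde L = \bar\partial - \tilde\mu\,\partial$. This is precisely the identity that makes $h = w\circ F$ satisfy the pulled-back equation with frame $\rho(\Phi\circ F,\Psi\circ F)$, so it can be read off from that derivation in \cite{framed}. Applying it to $\Phi$ and $\Psi$ gives $W_{\tilde L}(\Phi\circ F,\Psi\circ F) = \rho^{-1}\,W_L(\Phi,\Psi)\circ F$. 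The lower-order part likewise carries the single factor $\rho^{-1}$, so $\tilde N = N\circ F/\rho$. Any other $C^1$-framed representative differs from this one by a $C^1$ scaling, and then by \eqref{eq:N-scale}, which is the stated independence of choice.
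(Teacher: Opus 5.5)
Your proposal is correct and follows the paper's route. The paper defers the $C^1$ computations to \cite{framed} but names exactly your mechanism: the defect that $W_L$ picks up under \eqref{eq:sub-law} is identical to the derivative terms contaminating $\fa',\fb'$, namely $\Phi'(\Phi L\psi+\Psi L\bar\varphi)-\Psi'(\Phi L\varphi+\Psi L\bar\psi)$, so it cancels in $N'$. The paper likewise reads \eqref{eq:N-diffeo} off the $\rho^{-1}$-de-scaled representative, where your chain rule $\tilde L(g\circ F)=\rho^{-1}(Lg)\circ F$ supplies the single factor $\rho^{-1}$.
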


\begin{remark}[The $\rho^2$ ambiguity]\label{rem:rho-square}
The reading convention in \eqref{eq:N-diffeo} conceals an ambiguity,
harmless but worth surfacing. The pulled-back equation arrives with
the frame $\rho\,(\Phi \circ F, \Psi \circ F)$; the field $\tilde N$
of \eqref{eq:N-diffeo} is that of its de-scaling by $\rho^{-1}$, and
any other admissible de-scaling changes $\tilde N$ by the square $c^2$
of a zero-free continuous function. (When $F \in C^2$, so that
$\rho \in C^1$, the raw representative is itself $C^1$-framed, with
numerator field $\rho^2 \tilde N = \rho\,(N \circ F)$.) The numerator
field of the pulled-back equation is thus well defined exactly up to
the ambiguity \eqref{eq:N-scale} --- which is precisely the invariance
the charge will be shown to possess: no effect on a simply connected
domain (Theorem~\ref{thm:invariance}), even shifts of the component
windings on a multiply connected one (Remark~\ref{rem:multiply}).
\end{remark}

The law \eqref{eq:N-sub} is the heart of the matter, and it is worth
recalling why it holds: substituting $w = \varphi w' + \psi \bar w'$
contaminates the lower-order data with derivatives of $(\varphi, \psi)$,
and the $L$-Wronskian $W_L(\Phi, \Psi)$ acquires under \eqref{eq:sub-law}
a transformation defect which is \emph{the same expression}; in the
combination \eqref{eq:N-def} the two cancel identically, leaving the
clean factor $D$. The Wronskian is in the formula so that this can
happen \cite{framed}.

For the mass, the factor $D$ in \eqref{eq:N-sub} is a nuisance to be
cancelled: $|N|^2$ is divided by the squared frame determinant, which by
\eqref{eq:det-mult} absorbs $D^2$, and the conformal weight
$(1 - |\mu|^2)^{-1}$ then makes the resulting 2-form covariant under
\eqref{eq:N-diffeo}. For the charge, the same factor is not a nuisance
but a gift: $D$ is real and positive, so \eqref{eq:N-sub} does not move
$\arg N$ at all. No compensating weight is needed, and $\mu$ will not
appear in anything that follows. The next section takes the argument of
$N$ and winds it.

\section{The charge}\label{sec:charge}

The winding \eqref{eq:charge-def} was stated in the introduction as an
integral over $\partial\Omega$, which presumes a boundary regular enough
to integrate over. No such assumption is needed. Only the germ of $N$
near the boundary matters, and the correct definition winds $N$ along
curves \emph{inside} $\Omega$, where everything in sight is defined and
continuous. For a continuous loop $\sigma : [0,1] \to \C^*$, its winding
number $\operatorname{w}(\sigma) \in \Z$ is $(\theta(1) - \theta(0))/2\pi$
for any continuous argument $\theta$ of $\sigma$; for a continuous
$g : X \to \C^*$ and a loop $\gamma$ in $X$ we write
$\operatorname{w}(g, \gamma) := \operatorname{w}(g \circ \gamma)$. Two
standard facts are used repeatedly: the winding is invariant under free
homotopy of the loop within $X$, and a continuous non-vanishing function
on a simply connected domain admits a continuous logarithm, so its
winding along every loop there vanishes.

\begin{definition}\label{def:charge}
The equation \eqref{eq:framed} on the bounded simply connected domain
$\Omega$ is \emph{admissible} if its numerator field $N$
\eqref{eq:N-def} has compact zero set
\[
Z(N) \;:=\; N^{-1}(0) \;\Subset\; \Omega,
\]
in particular whenever $N$ is zero-free on a neighborhood of
$\partial\Omega$ in $\Omega$. A
positively oriented Jordan curve $\gamma \subset \Omega \setminus Z(N)$
is \emph{enclosing} if $Z(N)$ lies in its interior $U(\gamma)$. The
\emph{pseudo-analytic charge} of an admissible equation is
\begin{equation}\label{eq:charge-curve}
n \;:=\; \operatorname{w}(N, \gamma) \;\in\; \Z
\end{equation}
for any enclosing curve $\gamma$.
\end{definition}

\begin{lemma}[Well-definedness]\label{lem:well-defined}
Enclosing curves exist, and \eqref{eq:charge-curve} does not depend on
the choice.
\end{lemma}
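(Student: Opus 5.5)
Existence comes from a Riemann map. Let $f:\D\to\Omega$ be a conformal homeomorphism, available since $\Omega$ is bounded, simply connected and a proper subset of $\C$. The set $f^{-1}(Z(N))$ is compact in $\D$, so it lies in some closed disc $\{|\zeta|\le r_0\}$ with $r_0<1$. (If $Z(N)=\emptyset$, take any $r_0\in(0,1)$.) For every $r\in(r_0,1)$, the curve $\gamma_r:=f(\{|\zeta|=r\})$, traversed positively, is an analytic Jordan curve in $\Omega\setminus Z(N)$. Its interior is $U(\gamma_r)=f(\{|\zeta|<r\})$, which contains $Z(N)$, so $\gamma_r$ is enclosing. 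One small point needs checking: the bounded complementary component of $\gamma_r$ in $\C$ really is $f(\{|\zeta|<r\})$. This follows because $f(\{|\zeta|<r\})$ is connected and bounded by $\gamma_r$, while $\Omega\setminus f(\{|\zeta|\le r\})$ together with $\C\setminus\Omega$ is connected and unbounded.

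For independence, let $\gamma$ be any enclosing curve. By Jordan--Schoenflies, $K:=\overline{U(\gamma)}$ is a closed topological disc, and it lies in $\Omega$. To see the latter: $\Omega$ is simply connected, so $\C\setminus\Omega$ has no bounded component separated from $\infty$; equivalently, the Jordan interior of a curve in $\Omega$ lies in $\Omega$. Now choose $r$ close enough to $1$ that $K\subset f(\{|\zeta|<r\})$; this is possible because $f^{-1}(K)$ is compact in $\D$. Then $A:=\overline{U(\gamma_r)}\setminus U(\gamma)$ is a closed region between two nested Jordan curves. It contains no zero of $N$, because $Z(N)\subset U(\gamma)$. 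By the annulus theorem in the plane, $A$ is homeomorphic to a closed annulus whose two boundary circles are $\gamma$ and $\gamma_r$, both positively oriented. So $\gamma$ and $\gamma_r$ are freely homotopic in $\Omega\setminus Z(N)$, and homotopy invariance gives $\operatorname{w}(N,\gamma)=\operatorname{w}(N,\gamma_r)$.

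It remains to compare the $\gamma_r$ with one another. For $r_0<r<r'<1$, the map $(t,s)\mapsto f\bigl(((1-s)r+sr')e^{2\pi i t}\bigr)$ is a homotopy from $\gamma_r$ to $\gamma_{r'}$ that stays in $f(\{r_0<|\zeta|<1\})\subset\Omega\setminus Z(N)$. Hence all $\gamma_r$ with $r>r_0$ give the same winding, and therefore every enclosing curve gives the same value of \eqref{eq:charge-curve}.

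The main obstacle is the annulus step. Using the Schoenflies and annulus theorems for arbitrary Jordan curves is heavy machinery. A lighter alternative is to compute $\operatorname{w}(N,\gamma)-\operatorname{w}(N,\gamma_r)$ with a cycle argument. Write the $1$-cycle $\gamma_r-\gamma$ as a boundary in $\Omega\setminus Z(N)$ via a homological (Alexander duality) form of the Jordan theorem, and use that $\operatorname{w}(N,\cdot)$ is $\check H^1$-pairing, hence vanishes on boundaries. I would attempt the homotopy version first, citing the annulus theorem, and fall back on the homological argument if needed.
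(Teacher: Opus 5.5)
Your proposal is correct and follows the paper's proof essentially step for step. A Riemann map produces the enclosing curves $\gamma_r$, a radial homotopy compares different $\gamma_r$, and the Schoenflies/annulus theorem on the region between an arbitrary enclosing curve and some $\gamma_r$ gives the free homotopy. Your extra checks, that $U(\gamma_r)=f(\{|\zeta|<r\})$ and that $\overline{U(\gamma)}\subset\Omega$ by simple connectivity, fill in details the paper leaves implicit, and the homological fallback is not needed.
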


\begin{proof}
\emph{Existence.} Let $\tau : \D \to \Omega$ be a Riemann map. Since
$Z(N)$ is compact in $\Omega$, its preimage $\tau^{-1}(Z(N))$ is compact
in $\D$, hence contained in $\{|\zeta| < r\}$ for some $r < 1$. The
curve $\gamma_r := \tau(\{|\zeta| = r\})$, positively oriented, is an
analytic Jordan curve in $\Omega \setminus Z(N)$ with
$U(\gamma_r) = \tau(\{|\zeta| < r\}) \supset Z(N)$.

\emph{Independence.} First, for $r < s < 1$ both admissible as above,
$\gamma_r$ and $\gamma_s$ are freely homotopic within the image of the
annulus $\{r \le |\zeta| \le s\}$, which avoids $Z(N)$; the windings
agree. Next, let $\gamma$ be an arbitrary enclosing curve. Its trace is
compact in $\Omega$, so for $s$ close to $1$ both $\gamma$ and $Z(N)$
lie in $U(\gamma_s)$. By the Jordan--Schoenflies theorem the compact
region $\overline{U(\gamma_s)} \setminus U(\gamma)$ between the two
curves is a closed topological annulus with boundary
$\gamma \cup \gamma_s$; it avoids $Z(N)$, since $Z(N) \subset U(\gamma)$.
Within it $\gamma$ and $\gamma_s$ are freely homotopic, and both
orientations are positive, so
$\operatorname{w}(N, \gamma) = \operatorname{w}(N, \gamma_s)$.
\end{proof}

\begin{remark}\label{rem:boundary-formula}
When $\Omega$ is a Jordan domain and $N$ extends continuously and
without zeros to $\overline\Omega$, the charge equals the boundary
winding \eqref{eq:charge-def} of the introduction: the annulus argument
of Lemma~\ref{lem:well-defined}, run between an enclosing curve and
$\partial\Omega$ itself, gives the equality. Definition~\ref{def:charge}
simply refuses to let the regularity of $\partial\Omega$ enter a
quantity that never depended on it.
\end{remark}

\begin{theorem}[Invariance of the charge]\label{thm:invariance}
Admissibility and the charge $n$ are invariant under the three group
actions of Section~\ref{sec:framed}:
\begin{itemize}
\item[(i)] under every substitution $w = \varphi w' + \psi \bar w'$ with
$\varphi, \psi \in C^1(\Omega; \C)$, $|\varphi| > |\psi|$ pointwise;
\item[(ii)] under every scaling $E \mapsto cE$ with
$c \in C^0(\Omega; \C^*)$;
\item[(iii)] under every orientation-preserving $C^1$ diffeomorphism
$F : \Omega \to \Omega'$ of bounded simply connected domains.
\end{itemize}
Moreover $\mu$ enters $n$ only through the $L$-Wronskian of the frame,
$W_L = W_{\bar\partial} - \mu\, W_{\partial}$: if
$W_\partial(\Phi, \Psi) \equiv 0$ --- in particular on the
trivial-frame slice --- equations sharing $(\Phi, \Psi, \fa, \fb)$ have
the same numerator field, hence the same charge, for every Beltrami
coefficient. In general the charge is constant along every \emph{uniformly
admissible} deformation $\mu_t$ of the Beltrami coefficient over fixed
$(\Phi, \Psi, \fa, \fb)$: jointly continuous in
$(t, z) \in [0,1] \times \Omega$, with $|\mu_t| < 1$ pointwise for
each $t$, and with $\bigcup_{t \in [0,1]} Z(N_t) \Subset \Omega$.
\end{theorem}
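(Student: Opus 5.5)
The plan is to handle each action by reading off its transformation law from Proposition~\ref{prop:N-laws}. The common mechanism is this: if $N' = gN$ with $g$ continuous and zero-free on $\Omega$, then $Z(N') = Z(N)$, so admissibility is preserved. Any curve enclosing $Z(N)$ is then also enclosing for $N'$, and multiplicativity of winding gives $\operatorname{w}(N',\gamma) = \operatorname{w}(g,\gamma) + \operatorname{w}(N,\gamma)$. Since $\Omega$ is simply connected, $g$ has a continuous logarithm there and $\operatorname{w}(g,\gamma)=0$. We then invoke Lemma~\ref{lem:well-defined} to conclude that the charges agree.

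For (i), $g = D = |\varphi|^2-|\psi|^2 > 0$. This needs no topology at all, since $\arg N$ is literally unchanged. For (ii), $g = c^2$. This holds both for $C^1$ scalings and, by the convention of Section~\ref{sec:framed}, for merely continuous $c$, since well-definedness of $N_{cE}$ is already established.

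For (iii), the law \eqref{eq:N-diffeo} gives $\tilde N = (N\circ F)/\rho$ with $\rho$ zero-free. So $Z(\tilde N) = F^{-1}(Z(N))$, which is compact in $\Omega$ because $F$ is a homeomorphism, and admissibility transfers. Fix an enclosing curve $\gamma'$ for $N$ in $\Omega'$. Then $\gamma := F^{-1}\circ\gamma'$ is a Jordan curve in $\Omega\setminus Z(\tilde N)$. Two points need checking here:
\begin{itemize}
\item[(a)] $U(\gamma) = F^{-1}(U(\gamma'))$, so $\gamma$ encloses $Z(\tilde N)$. This follows because $F^{-1}(U(\gamma'))$ is a bounded component of $\Omega\setminus\gamma$ with closure in $\Omega$, hence is the interior.
\item[(b)] $\gamma$ is positively oriented, because $F$ preserves orientation.
\end{itemize}
Point (b) is the step I expect to need the most care. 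I would argue it by computing the winding of $F^{-1}\circ\gamma'$ about an interior point via a local-degree argument, using that orientation-preserving homeomorphisms have degree $+1$. Granting (a) and (b), $\operatorname{w}(\tilde N,\gamma) = \operatorname{w}(N,\gamma') - \operatorname{w}(\rho,\gamma) = \operatorname{w}(N,\gamma')$, since $\rho^{-1}$ is zero-free on the simply connected $\Omega$. The $\rho^2$ ambiguity of Remark~\ref{rem:rho-square} is a scaling, which is covered by (ii).

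For the $\mu$-statements, expand $N = \Phi\fb - \Psi\fa - W_{\bar\partial}(\Phi,\Psi) + \mu W_\partial(\Phi,\Psi)$. The only $\mu$-dependence is the last term, so if $W_\partial\equiv 0$ then $N$ is literally independent of $\mu$. For the deformation, $N_t(z)$ is jointly continuous in $(t,z)$, since $\mu_t$ is and the remaining terms are fixed continuous functions. Choose a single curve $\gamma$ enclosing the compact set $K=\overline{\bigcup_t Z(N_t)}\Subset\Omega$; one exists by the Riemann-map construction of Lemma~\ref{lem:well-defined} applied to $K$. Then $t\mapsto N_t\circ\gamma$ is a homotopy of loops in $\C^*$, so the winding is constant in $t$.

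One subtlety is that the hypothesis bounds the union rather than its closure. However, the union is assumed compactly contained in $\Omega$, and its closure then also lies in $\Omega$, which is all the curve construction needs.
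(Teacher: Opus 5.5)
Your proposal is correct and follows essentially the paper's proof. It uses the positive factor $D$ for (i), and the logarithm argument killing the windings of $c^2$ and $\rho$ for (ii)--(iii) together with a degree-$+1$ orientation check. For the $\mu$-statements it uses the affine-in-$\mu$ expansion, then a common enclosing curve and a homotopy in $\C^*$ for the deformation. The only difference is cosmetic: in (iii) you pull an enclosing curve back from $\Omega'$ by $F^{-1}$ rather than pushing one forward by $F$. You also supply the short argument that $U(F^{-1}\circ\gamma') = F^{-1}(U(\gamma'))$, which the paper simply asserts.
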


\begin{proof}
(i) By \eqref{eq:N-sub}, $N' = DN$ with $D = |\varphi|^2 - |\psi|^2$
continuous and strictly positive on $\Omega$. Hence $Z(N') = Z(N)$,
admissibility and the family of enclosing curves are unchanged, and
along any enclosing $\gamma$ a continuous argument of $N$ is a
continuous argument of $N'$: the factor $D$ moves the modulus and
nothing else. Thus $n' = n$, with no topology spent.

(ii) By \eqref{eq:N-scale}, $N' = c^2 N$ with $c$ non-vanishing, so
again $Z(N') = Z(N)$ and admissibility is preserved. Along an enclosing
$\gamma$,
\[
\operatorname{w}(N', \gamma)
= \operatorname{w}(N, \gamma) + 2\,\operatorname{w}(c, \gamma)
= \operatorname{w}(N, \gamma),
\]
since $c$ is continuous and zero-free on the simply connected $\Omega$
and therefore has a continuous logarithm there: its winding along every
loop in $\Omega$ vanishes.

(iii) Let $\tilde N = (N \circ F)/\rho$ be the numerator field of the
pulled-back equation, per \eqref{eq:N-diffeo}. Since $F$ is a
homeomorphism, $Z(\tilde N) = F^{-1}(Z(N))$ is compact in $\Omega$ iff
$Z(N)$ is compact in $\Omega'$: admissibility is preserved. Let
$\gamma$ be an enclosing curve for $\tilde N$ in $\Omega$. Then
\[
\operatorname{w}(\tilde N, \gamma)
= \operatorname{w}(N \circ F, \gamma) - \operatorname{w}(\rho, \gamma)
= \operatorname{w}(N, F \circ \gamma),
\]
the $\rho$-term vanishing because $\rho$ \eqref{eq:rho-def} is
continuous and non-vanishing on the simply connected $\Omega$ --- the
same logarithm argument as in (ii); this is the only use of the
ellipticity bound $|\mu| < 1$, which guarantees $\rho \neq 0$. It
remains to see that $F \circ \gamma$ is an enclosing curve for $N$ in
$\Omega'$. It is a Jordan curve, $F$ being injective and continuous; $F$
maps $U(\gamma)$ onto $U(F \circ \gamma)$, so
$Z(N) = F(Z(\tilde N)) \subset F(U(\gamma)) = U(F \circ \gamma)$; and
the orientation is positive because $F$ preserves orientation: for
$p \in U(\gamma)$, the winding of $F \circ \gamma$ about $F(p)$ is the
Brouwer degree $\deg(F, U(\gamma), F(p)) = +1$. Hence
$\operatorname{w}(N, F \circ \gamma) = n'$, and $n = n'$.

Finally, expanding the $L$-Wronskian gives
\[
N \;=\; \bigl(\Phi\fb - \Psi\fa - W_{\bar\partial}(\Phi, \Psi)\bigr)
\;+\; \mu\, W_{\partial}(\Phi, \Psi):
\]
the numerator field is affine in $\mu$ with coefficient $W_\partial$.
If $W_\partial \equiv 0$ the field, and with it the charge, does not
see $\mu$ at all. For the deformation statement, let $\mu_t$ be
uniformly admissible and set
$K := \overline{\bigcup_t Z(N_t)} \Subset \Omega$. Fix a Riemann map
$\tau : \D \to \Omega$ as in Lemma~\ref{lem:well-defined} and choose
$r < 1$ with $K \subset \tau(\{|\zeta| < r\})$: then
$\gamma := \gamma_r$ is a common enclosing curve for every $N_t$. On
the compact set $[0,1] \times \gamma$ the map
$(t, z) \mapsto N_t(z)$ is continuous --- $\mu_t$ enters only as a
bounded multiplier of the fixed continuous field $W_\partial$ --- and
zero-free, so $t \mapsto \operatorname{w}(N_t, \gamma)$ is an
integer-valued continuous function, hence constant.
\end{proof}

\begin{remark}[The $\mu$-dependence is real]\label{rem:mu-real}
Neither hypothesis in the last part of Theorem~\ref{thm:invariance} can
be dropped. On $\D$ take $\Phi = 1$, $\Psi = z/2$, $\fa = \fb = 0$:
then $W_{\bar\partial} = 0$, $W_\partial = 1/2$, and $N = \mu/2$ is the
Beltrami coefficient itself. The coefficients $\mu = 1/2$ and
$\mu = z/2$ give admissible equations of charges $0$ and $1$ over the
same $(\Phi, \Psi, \fa, \fb)$; and $\mu = z/2$, $\mu = \bar z/2$ give
the same vortex set $\{0\}$ with charges $+1$ and $-1$. Uniform
admissibility is likewise necessary: over the same data, a continuous
path of coefficients can carry the vortex out through the boundary and
readmit its reflection --- joining $\varepsilon z$ to
$\varepsilon\bar z$ through $\varepsilon(z - c_t)$,
$\varepsilon((1-s)(z-1) + s(\bar z - 1))$,
$\varepsilon(\bar z - \bar c_t)$, with $\varepsilon < 1/2$, so that
$|\mu_t| \le 2\varepsilon < 1$ on $\overline\D$ throughout --- 
keeping each equation admissible while the
charge jumps from $+1$ to $-1$.
\end{remark}

\begin{remark}[Multiply connected domains]\label{rem:multiply}
Let $\Omega$ have finitely many holes, with $N$ admissible in the same
sense ($Z(N) \Subset \Omega$). Each hole $H_j$, together with the part
of $Z(N)$ assigned to it by a system of separating curves, carries its
own winding $n_j$, and the three actions treat the tuple $(n_j)$
differently. Substitutions still act through the positive factor $D$ of
\eqref{eq:N-sub} and preserve every $n_j$ exactly. Scalings and
changes of variables, however, involve $c^2$ and $\rho^{-1}$, and on a
multiply connected domain a zero-free continuous function may wind
around a hole. Under a scaling each $n_j$ shifts by the even integer
$2\operatorname{w}(c, \gamma_j)$. Under a change of variables the
action is not a shift. Let $\epsilon_j = +1$ or $-1$ according as the
Jordan curve $F \circ \gamma_j$ is positively or negatively oriented
--- the latter occurs exactly when $F$ everts the component,
exchanging the enclosed complementary component with the unbounded
one, as the inversion of an annulus does --- and write $n'_j$ for the
winding of the image field along $F \circ \gamma_j$ taken with
positive orientation, so that
$\operatorname{w}(N, F \circ \gamma_j) = \epsilon_j\, n'_j$. The
homotopy $t \mapsto \bigl(F_z + t\,(\mu \circ F)\,\overline{F_{\bar z}}\bigr)/J$,
$t \in [0,1]$, is zero-free by the ellipticity bound, so
$\operatorname{w}(\rho, \gamma_j) = \operatorname{w}(F_z, \gamma_j)$;
taking $\gamma_j$ a regular $C^1$ Jordan curve, as we may, the tangent of the
image curve is $(F \circ \gamma_j)' = F_z\,\gamma_j'\,(1 + q)$ with
$q = (F_{\bar z}/F_z)\,\overline{\gamma_j'}/\gamma_j'$ of modulus
$< 1$, and the Umlaufsatz applied to the two Jordan curves $\gamma_j$
and $F \circ \gamma_j$ gives
$\operatorname{w}(F_z, \gamma_j) = \epsilon_j - 1 \in \{0, -2\}$. The
transformation law is therefore
\[
\tilde n_j
\;=\; \operatorname{w}(N, F \circ \gamma_j)
      - \operatorname{w}(\rho, \gamma_j)
\;=\; \epsilon_j\, n'_j - (\epsilon_j - 1)
\;=\;
\begin{cases}
n'_j, & \epsilon_j = +1,\\[2pt]
2 - n'_j, & \epsilon_j = -1:
\end{cases}
\]
exact on components whose boundary orientation $F$ preserves, the
affine negation $n \mapsto 2 - n$ on everted ones, and in either case
$\tilde n_j \equiv n'_j \pmod 2$. The component charges are therefore
invariants only in $\Z / 2\Z$. What survives
exactly is the \emph{total} charge, the winding along the full oriented
boundary cycle (outer curve minus hole curves, each realized by nearby
interior curves): a continuous zero-free function on $\Omega$ has total
winding zero along such a cycle, since the cycle bounds in
$\Omega \setminus Z(c) = \Omega$, and the contributions of $c^2$ and
$\rho^{-1}$ cancel in the sum, while $F$, being an orientation-preserving
homeomorphism, carries the boundary cycle of $\Omega$ to a cycle
homologous in $\Omega' \setminus Z(N)$ to the boundary cycle of
$\Omega'$: in $H_1(\Omega \setminus Z(\tilde N))$ the boundary cycle is
the sum of small positively oriented cycles about the vortex clusters,
and their images remain positively oriented by the degree argument of
Theorem~\ref{thm:invariance}(iii), which applies because the small
disks --- unlike the hole curves --- lie inside $\Omega$.
The clean dichotomy --- component
charges mod $2$, total charge in $\Z$ --- is another instance of the
framed formulation earning its keep: the substitution group, the only
one that mixes $w$ with $\bar w$, is precisely the one that costs
nothing.
\end{remark}

\section{Localization and vortices}\label{sec:vortices}

The charge was defined as a single winding along a curve near the
boundary. This section shows it is assembled from local contributions
sitting at the zeros of $N$ --- the \emph{vortices} of the equation ---
and settles what the charge does and does not control about them. The
right instrument is the Brouwer degree, whose planar theory we use in
its standard form \cite{deimling}: for $V \subset \C$ bounded open and
$g : \overline V \to \C$ continuous with $g \neq 0$ on $\partial V$,
the degree $\deg(g, V, 0) \in \Z$ is defined, depends only on
$g|_{\partial V}$ up to zero-free homotopy, is additive over disjoint
open subsets containing all zeros, and is non-zero only if $g$ vanishes
somewhere in $V$; when $V$ is the interior of a Jordan curve $\gamma$,
it equals the winding $\operatorname{w}(g, \gamma)$.

\begin{proposition}[The charge is a degree]\label{prop:degree}
Let the equation be admissible and $\gamma$ an enclosing curve. Then
\begin{equation}\label{eq:charge-degree}
n \;=\; \deg\bigl(N,\, U(\gamma),\, 0\bigr).
\end{equation}
Consequently:
\begin{itemize}
\item[(i)] if $Z(N) \subset V_1 \cup \dots \cup V_k$ with
$V_i \Subset \Omega$ open and pairwise disjoint, then
$n = \sum_i \deg(N, V_i, 0)$;
\item[(ii)] if $n \neq 0$ then $Z(N) \neq \emptyset$: a non-zero charge
forces a vortex;
\item[(iii)] if $Z(N)$ is finite, then $n = \sum_{p \in Z(N)} n_p$,
where the \emph{local charge} $n_p := \operatorname{w}(N, \partial D_p)$
is the winding along the boundary of any small disk
$D_p \ni p$ isolating $p$.
\end{itemize}
\end{proposition}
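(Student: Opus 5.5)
The main identity \eqref{eq:charge-degree} follows from the Brouwer degree facts recalled before the proposition, and (i)–(iii) then come from additivity and excision. By Definition~\ref{def:charge} and Lemma~\ref{lem:well-defined}, $n = \operatorname{w}(N,\gamma)$ for the given enclosing curve $\gamma$. The set $V := U(\gamma)$ is a bounded Jordan interior. Its closure $\overline V = U(\gamma)\cup\gamma$ lies in $\Omega$, because $\Omega$ is simply connected and contains $\gamma$: a bounded complementary component of $\Omega$ inside $U(\gamma)$ would contradict simple connectivity. So $N$ is continuous on $\overline V$, and it is zero-free on $\partial V = \gamma$ since $\gamma\subset\Omega\setminus Z(N)$. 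The recalled fact that the degree over a Jordan interior equals the boundary winding then gives $\deg(N,U(\gamma),0)=\operatorname{w}(N,\gamma)=n$.

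For (i), I would apply the excision/additivity property. All zeros of $N$ in $\overline{U(\gamma)}$ are in $Z(N)\subset\bigcup V_i$. The one point to handle is that the $V_i$ need not lie inside $U(\gamma)$. To fix this, I would first enlarge the enclosing curve. The set $K := \bigcup_i \overline{V_i}\cup Z(N)$ is compact in $\Omega$, so the Riemann-map construction of Lemma~\ref{lem:well-defined} gives an enclosing curve $\gamma_s$ with $K\subset U(\gamma_s)$. This $\gamma_s$ gives the same $n$ by that lemma. Additivity over the disjoint open sets $V_i\subset U(\gamma_s)$, which contain all zeros of $N$ in $U(\gamma_s)$, then yields $\deg(N,U(\gamma_s),0)=\sum_i\deg(N,V_i,0)$. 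Each summand is defined because $\partial V_i\cap Z(N)=\emptyset$: the $V_i$ are open and disjoint, and a zero on $\partial V_i$ would have to lie in some other $V_j$, which is open and disjoint from $V_i$, hence from its closure. For this step I expect to need to check carefully that disjointness of open sets excludes boundary zeros, which it does by the argument just given.

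Part (ii) is immediate: if $Z(N)=\emptyset$, the degree vanishes by the solution property, so $n=0$. Part (iii) is (i) with $V_p := D_p$ taken to be disjoint small disks, one around each point of the finite set $Z(N)$, each with closure in $\Omega$ and containing only $p$. By the Jordan-interior identification, $\deg(N,D_p,0)=\operatorname{w}(N,\partial D_p)$. Independence of the radius holds because two such circles bound an annulus free of zeros, so the argument of Lemma~\ref{lem:well-defined} applies. The only genuinely delicate step is the closure $\overline{U(\gamma)}\subset\Omega$ used in the first paragraph, and I would settle it by the simple-connectivity argument given there.
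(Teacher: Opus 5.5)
Your proof is correct and follows the paper's route: the degree--winding identity for the Jordan interior gives \eqref{eq:charge-degree}, and (i)--(iii) come from additivity, the solution property of the degree, and (i) applied to disjoint disks. Your extra steps fill in details the paper's one-line proof leaves implicit: you check $\overline{U(\gamma)}\subset\Omega$ via simple connectivity, and you enlarge the enclosing curve so the $V_i$ lie inside it, where the paper simply invokes excision.
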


\begin{proof}
\eqref{eq:charge-degree} is the degree--winding identity above;
(i) is additivity and excision; (ii) is the solution property of the
degree; (iii) is (i) with disks.
\end{proof}

Before asking what the charge says about the vortices, one should ask
what is invariant about the vortices themselves. The answer is:
everything, as a set.

\begin{proposition}[Invariance of the vortex set]\label{prop:vortex-set}
Under substitutions and scalings, $Z(N') = Z(N)$; under an
orientation-preserving change of variables $F$,
$Z(\tilde N) = F^{-1}(Z(N))$. The vortex set is an invariant of the
equivalence class, as a subset of $\Omega$ up to the homeomorphisms of
the class itself.
\end{proposition}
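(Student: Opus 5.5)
The proof reads the three transformation laws of Proposition~\ref{prop:N-laws} pointwise. Nothing topological is needed, since each law writes the new numerator field as the old one times a factor that never vanishes.

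\emph{Substitutions.} By \eqref{eq:N-sub}, $N' = DN$ with $D = |\varphi|^2 - |\psi|^2 > 0$ at every point of $\Omega$. Because $D(z) \neq 0$, we have $N'(z) = 0$ if and only if $N(z) = 0$, so $Z(N') = Z(N)$.

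\emph{Scalings.} By \eqref{eq:N-scale}, $N' = c^2 N$ with $c \in C^0(\Omega;\C^*)$, so $c(z)^2 \neq 0$ and the same pointwise argument applies. When $c$ is merely continuous, $N'$ is the field defined by the de-scaling convention $N_{cE} = c^2 N_E$ of Section~\ref{sec:framed}. That convention was shown there to be independent of the chosen de-scaling. The zero set therefore does not depend on any choice, because any two admissible readings differ by a zero-free factor $c^2$.

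\emph{Changes of variables.} By \eqref{eq:N-diffeo}, $\tilde N = (N \circ F)/\rho$, and $\rho \neq 0$ by \eqref{eq:rho-def} and the ellipticity bound $|\mu\circ F|\,|F_{\bar z}| < |F_z|$. Hence $\tilde N(z) = 0$ if and only if $N(F(z)) = 0$, which gives $Z(\tilde N) = F^{-1}(Z(N))$. Remark~\ref{rem:rho-square} shows that other readings of $\tilde N$ differ by a zero-free factor $c^2$, so this set is reading-independent as well.

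For the final sentence of the statement, we compose the actions. Any element of the equivalence class is reached from a given representative by a finite chain of substitutions, scalings and changes of variables. Along the chain, the vortex set is either fixed or transported by $F^{-1}$. It is therefore determined up to the composite orientation-preserving homeomorphism, which is one of the homeomorphisms of the class itself.

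I do not expect a real obstacle here. The only point needing care is reading conventions: at $C^0$ scalings and $C^1$ pullbacks, $N$ is defined only up to a zero-free square factor. This is dealt with by the well-definedness already established in Section~\ref{sec:framed} and Remark~\ref{rem:rho-square}.
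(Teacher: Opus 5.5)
Your proposal is correct and is the paper's own argument: the proof there is the one-line observation that the factors $D$, $c^2$ and $\rho^{-1}$ in the three transformation laws are zero-free. Your discussion of the reading conventions for continuous scalings and pullbacks, and of composing chains of actions, only spells out details the paper leaves implicit.
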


\begin{proof}
Immediate from \eqref{eq:N-sub}--\eqref{eq:N-diffeo}: the factors $D$,
$c^2$, $\rho^{-1}$ are zero-free.
\end{proof}

The group actions therefore never create or destroy a vortex. What can
destroy a vortex is a \emph{perturbation of the data} --- and this is
exactly the distinction the charge governs. The numerator field depends
continuously on the data: $N$ is a polynomial expression in $\Phi$,
$\Psi$, $L\Phi$, $L\Psi$, $\fa$, $\fb$, so a $C^1$-small perturbation of
the frame together with a $C^0$-small perturbation of $\mu, \fa, \fb$
moves $N$ uniformly little on compact sets. The stability statement is
Rouch\'e's, in degree form:

\begin{proposition}[Stability]\label{prop:stability}
Let $N$, $\tilde N$ be the numerator fields of two admissible equations
and $\gamma$ a common enclosing curve. If
\[
|\tilde N - N| \;<\; |N| \qquad \text{on } \gamma,
\]
then the charges agree, $\tilde n = n$. Locally: if
$\overline D \subset \Omega$ is a closed disk with $N \neq 0$ on
$\partial D$ and $\deg(N, D, 0) \neq 0$, then every continuous field
uniformly closer to $N$ on $\partial D$ than $\min_{\partial D} |N|$
also vanishes in $D$.
\end{proposition}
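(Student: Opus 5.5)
The plan is to reduce both statements to the homotopy invariance of the winding number (equivalently, of the Brouwer degree), using the straight-line homotopy between $N$ and the perturbed field.

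\emph{Global statement.} Since $\gamma$ is a common enclosing curve, both $N$ and $\tilde N$ are zero-free on $\gamma$, and by Definition~\ref{def:charge} and Lemma~\ref{lem:well-defined} we have $n = \operatorname{w}(N,\gamma)$ and $\tilde n = \operatorname{w}(\tilde N,\gamma)$. Set $H_t := N + t(\tilde N - N)$ for $t \in [0,1]$. On $\gamma$ the reverse triangle inequality gives
\[
|H_t| \;\ge\; |N| - t\,|\tilde N - N| \;>\; 0,
\]
because the hypothesis is strict. Thus $(t,z) \mapsto H_t(\gamma(z))$ is a continuous zero-free map on the compact set $[0,1]\times\gamma$. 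Hence $t \mapsto \operatorname{w}(H_t,\gamma)$ is continuous and integer-valued, so it is constant, and $\tilde n = n$.

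A lighter alternative avoids the homotopy. Write $\tilde N = N\,(1+q)$ with $q := (\tilde N - N)/N$ on $\gamma$. Then $|q|<1$, so $1+q$ takes values in the right half-plane, where it has a continuous argument with values in $(-\pi/2,\pi/2)$. Its winding along $\gamma$ is therefore zero, and additivity of the winding under products gives the same conclusion.

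\emph{Local statement.} Let $g$ be a continuous field on $\overline D$ with $\sup_{\partial D}|g - N| < \min_{\partial D}|N|$; the minimum is positive because $\partial D$ is compact and $N \neq 0$ there. This makes the strict pointwise inequality $|g - N| < |N|$ hold on $\partial D$. The same homotopy $N + t(g-N)$ is then zero-free on $\partial D$, and $\partial D$ is a Jordan curve bounding $D$. Homotopy invariance of the degree, as recalled from \cite{deimling}, therefore yields $\deg(g,D,0) = \deg(N,D,0) \neq 0$. The solution property of the degree, used exactly as in Proposition~\ref{prop:degree}(ii), then forces $g$ to vanish somewhere in $D$.

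\emph{Main obstacle.} There is essentially none; the only care needed is in the hypotheses. In the global part we need strict inequality at every point of $\gamma$, which the statement supplies. In the local part we need $g$ to be defined and continuous on all of $\overline D$, not just on $\partial D$, so that its degree makes sense. If the intended reading requires closeness only on $\partial D$, the proof works verbatim once $g$ is continuous on $\overline D$.

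I would close with a remark connecting the statement to the abstract. Applied to a perturbation of the data that moves $N$ uniformly by less than $\min_{\partial D_p}|N|$, the local statement shows that an isolated vortex of non-zero local charge $n_p$ persists. This uses the continuous dependence of $N$ on $(\Phi,\Psi,\mu,\fa,\fb)$ noted just before the proposition.
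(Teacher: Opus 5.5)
Your proof is correct and matches the paper's argument: the straight-line homotopy $(1-t)N + t\tilde N$ is zero-free on $\gamma$ (resp.\ $\partial D$), so the integer-valued winding (degree) is constant in $t$, and the local statement then adds the solution property of the degree. Your multiplicative variant, writing $\tilde N = N(1+q)$ with $1+q$ confined to the right half-plane, is an equally valid shortcut to the same conclusion.
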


\begin{proof}
The segment $N_t = (1-t)N + t\tilde N$ is zero-free on $\gamma$
(resp.\ $\partial D$) for $t \in [0,1]$, so
$t \mapsto \operatorname{w}(N_t, \gamma)$ is an integer-valued
continuous function, hence constant; the local statement adds the
solution property of the degree.
\end{proof}

\begin{remark}[Removability at local charge zero]\label{rem:removable}
The converse of the local statement holds and completes the dichotomy.
Let $p$ be an isolated vortex with $n_p = 0$ and $\varepsilon > 0$.
Choose a disk $D_\delta \ni p$ with $|N| \le \varepsilon$ on
$\overline{D_\delta}$ (continuity, $N(p) = 0$) and $N \neq 0$ on
$\partial D_\delta$. Since $n_p = 0$, on $\partial D_\delta$ one may
write $N = e^{u + i\theta}$ with $u, \theta$ real continuous;
Tietze-extend $u$ into $D_\delta$ with values in
$[\min_{\partial D_\delta} u, \max_{\partial D_\delta} u]$, extend
$\theta$ continuously, and replace $N$ on $D_\delta$ by
$e^{\tilde u + i\tilde\theta}$: the result is continuous, zero-free on
$D_\delta$, agrees with $N$ on $\partial D_\delta$, and differs from
$N$ by at most $2\varepsilon$. The perturbed field is again a
numerator field --- and not merely of \emph{some} equation
(Lemma~\ref{lem:realization}) but of a small perturbation of the given
one: $N$ is affine in $\fb$ with zero-free coefficient $\Phi$, so the
modification $\delta N$, supported in $\overline{D_\delta}$ and of
uniform size at most $2\varepsilon$, is realized over the same
$(\mu, \Phi, \Psi, \fa)$ by the perturbation
$\fb \mapsto \fb + \delta N / \Phi$, which is $C^0$-small because
$|\Phi|$ is bounded below on the compact $\overline{D_\delta}$. An isolated vortex thus
persists under all sufficiently small perturbations of the data
precisely when its local charge is non-zero.
\end{remark}

The two propositions together assign the charge its exact job. The
vortex set is an invariant of the class but not of the data: a zero of
local charge $0$ is removable by an arbitrarily small perturbation,
while a zero (or cluster of zeros) of non-zero local charge persists
under every sufficiently small one. The charge --- total or local ---
is the perturbation-proof part of the vortex structure.

It is worth recording how unconstrained that structure is. Solutions of
a homogeneous Vekua equation obey the similarity principle
$w = e^s h$ with $h$ holomorphic \cite{bers, vekua}: their zeros are
isolated and their local windings positive. No such principle governs
$N$, which is data, not a solution --- and in the framed class this is
not merely a failure of proof but a theorem, by free realization:

\begin{lemma}[Realization]\label{lem:realization}
Every $N \in C^0(\Omega; \C)$ is the numerator field of a framed
equation --- indeed of a trivially framed one: take
$(\Phi, \Psi) = (1, 0)$, $\fa = 0$, $\fb = N$, and $\mu$ arbitrary.
Then $W_L(1, 0) = 0$ and the field \eqref{eq:N-def} is $N$ itself.
\end{lemma}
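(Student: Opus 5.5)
The statement is a direct computation, so the plan is to verify that the proposed data define a legitimate member of the framed class and then evaluate the numerator field \eqref{eq:N-def} on them. The first step is membership. Take $(\Phi,\Psi)=(1,0)$, $\fa=0$, $\fb=N$, any $\ff\in C^0$ (say $\ff=0$), and any continuous $\mu$ with $|\mu|<1$ (say $\mu\equiv 0$). Then:
\begin{itemize}
\item the frame is constant, hence $C^1$, and satisfies $|\Phi|=1>0=|\Psi|$;
\item the lower-order data are continuous, since $N\in C^0(\Omega;\C)$ by hypothesis;
\item the resulting equation \eqref{eq:framed} is exactly the trivially framed Beltrami--Vekua equation $w_{\bar z}-\mu w_z+N\bar w=\ff$.
\end{itemize}
This meets the regularity standing assumptions of Section~\ref{sec:framed}.

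The second step is the evaluation of $N$. Since $\Phi$ and $\Psi$ are constant, $L\Phi=L\Psi=0$ regardless of $\mu$, because $\mu$ enters $L=\bar\partial-\mu\partial$ only as a multiplier of a derivative. Hence $W_L(1,0)=1\cdot L0-0\cdot L1=0$. Substituting into \eqref{eq:N-def} gives
\[
\Phi\fb-\Psi\fa-W_L(\Phi,\Psi)=1\cdot N-0-0=N .
\]
So the prescribed field is the numerator field, for every admissible choice of $\mu$.

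No genuine obstacle is expected. The only point needing care is that no differentiability is demanded of $N$. This holds because $N$ enters only through $\fb$, which is never differentiated, while the single derivative in the construction falls on the constant frame. Consequently $N$ may be arbitrary in $C^0$, for instance with non-isolated zeros or zeros of negative local index.

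I would close with a remark that mirrors the perturbation argument of Remark~\ref{rem:removable}. The map $\fb\mapsto\Phi\fb$ is onto $C^0$ whenever $\Phi$ is zero-free. So the same realization works over any fixed frame and $\fa$: set $\fb:=\bigl(N+\Psi\fa+W_L(\Phi,\Psi)\bigr)/\Phi$.
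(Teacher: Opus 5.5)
Your proposal is correct and is the paper's argument: with the constant frame $(1,0)$ the $L$-Wronskian vanishes for every admissible $\mu$, so $\fb = N$ reproduces $N$ in \eqref{eq:N-def}. Your membership check and your closing observation are also sound. That observation is that $\fb := \bigl(N + \Psi\fa + W_L(\Phi,\Psi)\bigr)/\Phi$ realizes $N$ over any fixed frame and $\fa$, and it is the same mechanism the paper uses in Remark~\ref{rem:removable}.
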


So the vortex set can be any compact subset of $\Omega$, and local
charges, where defined, can take any integer values, negative included.
The charge does not know that $N$ came from an elliptic equation; it
only knows that it transforms by \eqref{eq:N-sub}--\eqref{eq:N-diffeo}.

Finally, the normalization question: when can the vortexless part of
the theory be trivialized? For zero-free $N$ the answer is complete,
and gives the class a canonical form.

\begin{proposition}[Positive normalization]\label{prop:normalization}
An admissible equation is equivalent, by a scaling alone, to one with
$N \equiv 1$ if and only if $N$ is zero-free on $\Omega$. In that case
$n = 0$.
\end{proposition}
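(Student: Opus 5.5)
The plan is to prove the two directions separately, using only the scaling law \eqref{eq:N-scale} and the logarithm fact recalled at the start of Section~\ref{sec:charge}.

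\emph{Necessity.} Suppose a scaling $E \mapsto cE$, with $c \in C^0(\Omega;\C^*)$, produces an equation whose numerator field is identically $1$. By \eqref{eq:N-scale} this means $c^2 N \equiv 1$ on $\Omega$. Since $c$ is zero-free, $N = c^{-2}$ is zero-free as well. By Proposition~\ref{prop:vortex-set}, scalings neither create nor destroy zeros, so this direction costs nothing.

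\emph{Sufficiency.} Suppose $N$ is continuous and zero-free on the simply connected domain $\Omega$. Then $N$ has a continuous logarithm $\ell$, so $N = e^{\ell}$. Set $c := e^{-\ell/2}$. This $c$ is continuous and zero-free, and $c^2 N = e^{-\ell} e^{\ell} = 1$.

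The only point that needs care is the scaling convention of Section~\ref{sec:framed}, since $c$ is merely $C^0$. When $N$ is only continuous, $c$ need not be $C^1$. Scaling by such a $c$ falls under the convention $N_{cE} := c^2 N_E$, which is well defined and realizes the scaled equation as a member of the class with numerator field exactly $1$. So no regularity obstruction arises. If $N$ happens to be $C^1$, then $c$ is $C^1$ too and the law is a direct computation. I expect this bookkeeping, namely confirming that a $C^0$ scaling is an admissible group action and that the resulting field is read off correctly, to be the only delicate step. Theorem~\ref{thm:invariance}(ii) already handles such scalings.

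\emph{The charge.} In the zero-free case $Z(N) = \emptyset$, so every positively oriented Jordan curve in $\Omega$ is enclosing. Since $N$ has a continuous logarithm on $\Omega$, its winding along every loop vanishes, giving $n = 0$. Equivalently, Proposition~\ref{prop:degree}(ii) in contrapositive form gives the same conclusion. Alternatively, once $N$ is normalized to $N \equiv 1$, the constant field has charge $0$, and Theorem~\ref{thm:invariance}(ii) transports this back to the original equation.
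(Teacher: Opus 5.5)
Your proposal is correct and follows the paper's proof essentially step for step. Necessity comes from $c$ being zero-free (Proposition~\ref{prop:vortex-set}). Sufficiency uses a continuous logarithm $N = e^{\ell}$ and the merely continuous scaling $c = e^{-\ell/2}$, read through the de-scaling convention $N_{cE} := c^2 N_E$. Finally, $n = 0$ follows from $Z(N) = \emptyset$ via Proposition~\ref{prop:degree}(ii), or equivalently from the logarithm directly.
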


\begin{proof}
Necessity is Proposition~\ref{prop:vortex-set}: scalings do not change
$Z(N)$, and $N \equiv 1$ has none. For sufficiency, $N$ zero-free and
continuous on the simply connected $\Omega$ admits a continuous
logarithm, $N = e^g$; the scaling $c := e^{-g/2} \in C^0(\Omega; \C^*)$
gives, by \eqref{eq:N-scale} --- read, for continuous $c$, as the
definition of the scaled numerator field per the convention of
Section~\ref{sec:framed} --- $N' = c^2 N = 1$. The last claim is
Proposition~\ref{prop:degree}(ii).
\end{proof}

\begin{remark}[Scalings suffice]\label{rem:scalings-suffice}
Proposition~\ref{prop:normalization} normalizes by a scaling alone, but
nothing is lost thereby: of the three actions, substitutions multiply
$N$ by the positive factor $D$ and cannot repair a sign or remove a
zero, and changes of variables only relocate $Z(N)$; the obstruction to
normalization by the full group is therefore the same as the
obstruction to normalization by scalings.
\end{remark}

\begin{remark}[The charge is not the whole obstruction]\label{rem:oscillation}
In the presence of vortices, the vanishing of the total charge --- and
even of every local charge --- does not suffice to normalize $N$
non-negative by a continuous scaling. By Lemma~\ref{lem:realization}
realize, near an interior point $p$,
\[
N(z) \;=\; |z - p|\; e^{\,i/|z - p|}, \qquad N(p) = 0,
\]
a continuous field with a single vortex of local charge $0$: the
argument is constant on circles about $p$, so every winding vanishes.
Yet any scaling $c$ with $c^2 N \ge 0$ off $p$ must have
$\arg c \equiv -\tfrac{1}{2}|z-p|^{-1} \pmod{\pi}$ there, which
oscillates without limit as $z \to p$: no such $c$ is continuous at
$p$. The charge captures the topological obstruction to positivity,
and Proposition~\ref{prop:normalization} shows that off the vortex set
it is the only one; at a vortex an analytic obstruction --- the
oscillation of $\arg N$ --- can survive even when all charges vanish.
\end{remark}

The trivial frame used in Lemma~\ref{lem:realization} is where the
present theory meets the Beltrami--Vekua class of \cite{mass}, and the
next section works out that slice in full.

\section{The trivial-frame slice}\label{sec:trivial}

The Beltrami--Vekua class of \cite{mass} is the slice
$(\Phi, \Psi) = (1, 0)$ of the framed class:
\begin{equation}\label{eq:bv}
w_{\bar z} - \mu\, w_z + \Acal\, w + \Bcal\, \bar w \;=\; \Fcal,
\qquad |\mu| < 1 .
\end{equation}
There $W_L(1, 0) = 0$ and the numerator field collapses to the
coefficient itself, $N = \Bcal$: the pseudo-analytic charge of
\eqref{eq:bv} is the winding of $\Bcal$,
\begin{equation}\label{eq:charge-B}
n \;=\; \operatorname{w}(\Bcal, \gamma),
\end{equation}
along any enclosing curve for $Z(\Bcal)$, and its mass density is
$|\Bcal|^2 (1 - |\mu|^2)^{-1}\, dx\, dy$ \cite{mass}. Modulus and
argument of the single coefficient $\Bcal$ thus split between the two
invariants: the mass integrates $|\Bcal|^2$ against the conformal
weight of $\mu$, the charge winds $\arg \Bcal$ and forgets $\mu$
altogether.

To read the invariance of \eqref{eq:charge-B} inside the slice one must
first know its stabilizer.

\begin{proposition}[The gauge group of the slice]\label{prop:gauge}
A substitution--scaling pair preserves the trivial frame if and only if
it is a \emph{gauge}: the substitution $w = \varphi w'$ with
$\varphi \in C^1(\Omega; \C^*)$, $\psi = 0$, followed by the scaling
$c = \varphi^{-1}$. Its action on the data of \eqref{eq:bv} is
\begin{equation}\label{eq:gauge-law}
\Acal \;\longmapsto\; \Acal + \frac{L\varphi}{\varphi}, \qquad
\Bcal \;\longmapsto\; \Bcal\, \frac{\bar\varphi}{\varphi}, \qquad
\Fcal \;\longmapsto\; \frac{\Fcal}{\varphi}, \qquad
\mu \;\longmapsto\; \mu .
\end{equation}
\end{proposition}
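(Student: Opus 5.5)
The plan is to compute the composite action of a substitution followed by a scaling on the trivial frame $(\Phi,\Psi)=(1,0)$, read off when the frame returns to $(1,0)$, and then obtain the laws \eqref{eq:gauge-law} by direct substitution into \eqref{eq:bv}, with \eqref{eq:N-sub}--\eqref{eq:N-scale} as a consistency check on the $\Bcal$-law.

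\emph{Characterization.} By \eqref{eq:sub-law} with $\Phi=1$, $\Psi=0$, the substitution $w=\varphi w'+\psi\bar w'$ sends the frame to $(\varphi,\psi)$. A subsequent scaling by $c$ sends it to $(c\varphi,c\psi)$. Performing the scaling first gives the same result, since $(c,0)\mapsto(c\varphi,c\psi)$. The pair preserves the trivial frame if and only if $c\psi=0$ and $c\varphi=1$. Since $c$ is zero-free, this means $\psi\equiv0$ and $c=\varphi^{-1}$. The frame condition $|\varphi|>|\psi|=0$ forces $\varphi$ to be zero-free, so $\varphi\in C^1(\Omega;\C^*)$ and $c=\varphi^{-1}$ is a genuine $C^1$ scaling; no reading convention is needed. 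Conversely, every such pair visibly returns the frame to $(1,0)$. This settles the ``if and only if''.

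\emph{Action on the data.} I would derive the laws directly rather than through the companion formula for $\fb$ that the paper only alludes to. Put $w=\varphi w'$ into \eqref{eq:bv}. The Leibniz rule for the first-order operator $L$ gives $Lw=\varphi\,Lw'+w'\,L\varphi$, and $\bar w=\bar\varphi\,\bar w'$. The equation therefore becomes
\[
\varphi\,Lw'+(L\varphi+\Acal\varphi)\,w'+\Bcal\bar\varphi\,\bar w'=\Fcal .
\]
Dividing by $\varphi$ is the scaling $c=\varphi^{-1}$, and it yields exactly \eqref{eq:gauge-law}, with $\mu$ untouched because neither action moves it. As a check, \eqref{eq:N-sub} and \eqref{eq:N-scale} together give $N'=c^2DN=\varphi^{-2}|\varphi|^2\Bcal=\Bcal\,\bar\varphi/\varphi$. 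This agrees with the $\Bcal$-law because $N=\Bcal$ on the slice.

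\emph{Expected obstacle.} The only delicate point is the bookkeeping that makes the characterization exhaustive. One must check that the order of substitution and scaling does not matter, and that an arbitrary composite of several such pairs reduces to a single pair. The latter holds because substitutions compose to substitutions (the frame action is a group action) and scalings commute past substitutions, as the computation above shows. Everything else is a one-line calculation.
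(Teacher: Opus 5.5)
Your proposal is correct and follows the paper's proof. The frame computation $(1,0)\mapsto(\varphi,\psi)\mapsto(c\varphi,c\psi)$ is the same, forcing $\psi=0$, $c=\varphi^{-1}$ and $\varphi$ zero-free, and your Leibniz-rule substitution of $w=\varphi w'$ simply rederives the lower-order laws $\Acal\mapsto L\varphi+\Acal\varphi$, $\Bcal\mapsto\Bcal\bar\varphi$ that the paper quotes before dividing by $\varphi$ (the $N$-law check and the remarks on ordering and composites are harmless extras).
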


\begin{proof}
At the trivial frame the substitution law \eqref{eq:sub-law} gives
$(\Phi', \Psi') = (\varphi, \psi)$, and a subsequent scaling by $c$
gives $(c\varphi, c\psi)$; this equals $(1, 0)$ iff $\psi = 0$ and
$c = \varphi^{-1}$. The condition $|\varphi| > |\psi| = 0$ is
$\varphi \neq 0$. The laws \eqref{eq:gauge-law} are the composite of the
lower-order substitution laws of Section~\ref{sec:framed} at
$(\Phi, \Psi) = (1, 0)$, $\psi = 0$ --- namely
$\Acal \mapsto L\varphi + \Acal\varphi$,
$\Bcal \mapsto \Bcal\bar\varphi$ --- with the division by $\varphi$.
\end{proof}

The gauge factor on $\Bcal$ is $\bar\varphi/\varphi$: unimodular, so
$|\Bcal|$ is pointwise gauge-invariant (whence the mass density), but
\emph{not} positive --- the argument moves by $-2\arg\varphi$. This is
the structural difference between the slice and the framed class, and
it is worth displaying exactly where the invariance of the charge now
draws on topology.

\begin{corollary}\label{cor:slice-charge}
For admissible \eqref{eq:bv} on a bounded simply connected $\Omega$, the
winding \eqref{eq:charge-B} is invariant under every gauge
\eqref{eq:gauge-law} and under every orientation-preserving $C^1$ change
of variables, and coincides with the framed charge of
Definition~\ref{def:charge}. Under a gauge,
\[
\operatorname{w}(\Bcal', \gamma)
\;=\; \operatorname{w}(\Bcal, \gamma) - 2\,\operatorname{w}(\varphi, \gamma)
\;=\; \operatorname{w}(\Bcal, \gamma),
\]
the second equality because the zero-free continuous $\varphi$ on the
simply connected $\Omega$ cannot wind.
\end{corollary}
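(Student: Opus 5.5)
The plan has three parts: identify the two charges, compute the gauge action directly on the winding of $\Bcal$, and derive invariance under changes of variables from Theorem~\ref{thm:invariance}(iii).

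\emph{Coincidence with the framed charge.} At the trivial frame $(\Phi,\Psi)=(1,0)$ we have $W_L(1,0)=1\cdot L0-0\cdot L1=0$ and $\Phi\fb-\Psi\fa=\Bcal$. Hence $N=\Bcal$ identically, with the identification $\fb=\Bcal$ read off from \eqref{eq:bv}. Admissibility of \eqref{eq:bv} is therefore the condition $Z(\Bcal)\Subset\Omega$, the enclosing curves are the same, and \eqref{eq:charge-B} is literally \eqref{eq:charge-curve}. By Lemma~\ref{lem:well-defined} it is independent of the enclosing curve.

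\emph{Gauge invariance by direct computation.} This is done inside the slice, as displayed in the statement. By Proposition~\ref{prop:gauge}, a gauge sends $\Bcal\mapsto\Bcal'=\Bcal\,\bar\varphi/\varphi$ with $\varphi\in C^1(\Omega;\C^*)$. Since $|\bar\varphi/\varphi|=1$, the zero sets agree, $Z(\Bcal')=Z(\Bcal)$, so admissibility and the family of enclosing curves are unchanged. Along an enclosing $\gamma$, winding is additive under products, and $\operatorname{w}(\bar\varphi,\gamma)=-\operatorname{w}(\varphi,\gamma)$ because conjugation reverses a continuous argument. This gives $\operatorname{w}(\Bcal',\gamma)=\operatorname{w}(\Bcal,\gamma)-2\operatorname{w}(\varphi,\gamma)$. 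Finally, $\varphi$ is zero-free and continuous on the simply connected $\Omega$, so it has a continuous logarithm there, and $\operatorname{w}(\varphi,\gamma)=0$.

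As a cross-check, a gauge is a composite of a substitution with $D=|\varphi|^2$ and a scaling with $c=\varphi^{-1}$. By \eqref{eq:N-sub} and \eqref{eq:N-scale} this multiplies $N$ by $|\varphi|^2\varphi^{-2}=\bar\varphi/\varphi$, so the computation agrees with Theorem~\ref{thm:invariance}(i),(ii).

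\emph{Changes of variables.} Here I would cite Theorem~\ref{thm:invariance}(iii) directly, since the winding of $\Bcal$ is the framed charge. The one point needing care is that the pulled-back equation is no longer trivially framed: its raw frame is $\rho\,(1,0)$. After the de-scaling $c=\rho^{-1}$ prescribed in Proposition~\ref{prop:N-laws}, the frame is again $(1,0)$ and the new coefficient is $\tilde\Bcal=(\Bcal\circ F)/\rho$, which is exactly $\tilde N$ in \eqref{eq:N-diffeo}. So the slice is preserved, and the winding of $\tilde\Bcal$ equals that of $\Bcal$ by the argument of Theorem~\ref{thm:invariance}(iii). That argument uses two facts: $\rho$ does not wind on the simply connected domain, and $F$ carries enclosing curves to positively oriented enclosing curves.

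The main obstacle, though a modest one, is this bookkeeping step: confirming that the change of variables lands back in the slice after de-scaling by $\rho^{-1}$, so that the slice-level quantity \eqref{eq:charge-B} and the framed $\tilde N$ genuinely coincide. Once that is checked, everything else reduces to results already established in the paper.
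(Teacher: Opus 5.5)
Your proposal is correct and follows the paper's proof: $N=\Bcal$ on the slice makes the coincidence definitional, the invariances then come from Theorem~\ref{thm:invariance}, and the displayed gauge computation is the slice-internal check. Your added verification is also right: de-scaling the pulled-back equation by $\rho^{-1}$ returns it to the trivial frame with $\tilde\Bcal=(\Bcal\circ F)/\rho$, a step the paper leaves implicit.
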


\begin{proof}
The coincidence with the framed charge is definitional, $N = \Bcal$ on
the slice; the invariances are then Theorem~\ref{thm:invariance}. The
displayed computation is the slice-internal proof: $Z(\Bcal)$ is
unchanged since $\bar\varphi/\varphi \neq 0$, and the winding of
$\bar\varphi/\varphi$ along $\gamma$ is $-2\operatorname{w}(\varphi, \gamma) = 0$.
\end{proof}

The real-gauge normalization of Section~\ref{sec:vortices} also has a
slice-internal form. Note the regularity: gauges are $C^1$, since
$L\varphi$ enters \eqref{eq:gauge-law}.

\begin{proposition}[Real gauge]\label{prop:real-gauge}
If $\Bcal \in C^1(\Omega; \C)$ is zero-free on the simply connected
$\Omega$, there is a gauge carrying \eqref{eq:bv} to an equation with
$\Bcal' = |\Bcal| > 0$. In particular $n = 0$; and conversely no
equation with $n \neq 0$, or with a vortex, admits a gauge making
$\Bcal'$ positive.
\end{proposition}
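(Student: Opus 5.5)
The plan is to construct the gauge explicitly from a square root of $\Bcal$, using the gauge law \eqref{eq:gauge-law}. Since $\Bcal$ is zero-free and continuous on the simply connected $\Omega$, it admits a continuous logarithm, and because $\Bcal \in C^1$ this logarithm can be taken $C^1$. Concretely, write $\Bcal = |\Bcal|\,e^{i\theta}$ with $\theta$ a continuous argument; $\theta$ is then $C^1$, since locally $\theta = \operatorname{Im}\log\Bcal$ for a holomorphic branch of $\log$ composed with the $C^1$ map $\Bcal$. Set
\[
\varphi := e^{i\theta/2} \in C^1(\Omega; \C^*).
\]
By Proposition~\ref{prop:gauge}, the gauge $w = \varphi w'$ followed by the scaling $c = \varphi^{-1}$ preserves the trivial frame and sends
\[
\Bcal \longmapsto \Bcal\,\frac{\bar\varphi}{\varphi} = |\Bcal|\,e^{i\theta}\,e^{-i\theta} = |\Bcal| > 0 .
\]
The new $\Acal' = \Acal + L\varphi/\varphi$ is $C^0$ because $\varphi \in C^1$. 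This is exactly where the $C^1$ hypothesis on $\Bcal$ is spent, and it is the only delicate point: we must check that the square root of the phase is $C^1$ and not merely continuous.

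The claim $n = 0$ follows from Proposition~\ref{prop:degree}(ii). Alternatively, admissibility holds trivially with $Z(\Bcal) = \emptyset$, any Jordan curve encloses, and $\Bcal$ has a continuous logarithm, so its winding along every loop vanishes.

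For the converse, suppose some gauge yields $\Bcal' > 0$ pointwise. Then $Z(\Bcal') = \emptyset$, and since $\bar\varphi/\varphi$ is zero-free, $Z(\Bcal) = Z(\Bcal') = \emptyset$ (Proposition~\ref{prop:vortex-set}). So an equation with a vortex admits no such gauge. If $n \neq 0$, Proposition~\ref{prop:degree}(ii) forces a vortex, which reduces this case to the previous one. One can also argue directly through Corollary~\ref{cor:slice-charge}: the winding of $\Bcal'$ equals $n$, but a positive function has winding $0$.

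The main obstacle is the regularity bookkeeping in the construction of $\varphi$. I would handle it by noting that $\theta$ is $C^1$ locally and that continuous arguments differ locally by constants in $2\pi\Z$. Hence the globally chosen continuous $\theta$ is $C^1$ everywhere, and $e^{i\theta/2}$ is $C^1$ as well.
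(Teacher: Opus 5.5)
Your proposal is correct and is essentially the paper's proof: you take a $C^1$ logarithm of $\Bcal$ and apply the gauge $\varphi = e^{i\theta/2}$, which gives $\Bcal' = \Bcal\,e^{-i\theta} = |\Bcal|$, and you get the converse from the gauge-invariance of $Z(\Bcal)$ and of the charge. The differences are cosmetic: you reduce the case $n \neq 0$ to the vortex case through Proposition~\ref{prop:degree}(ii), where the paper cites Corollary~\ref{cor:slice-charge} directly, and you justify the $C^1$ regularity of $\theta$, which the paper asserts without comment.
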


\begin{proof}
Zero-free $\Bcal \in C^1$ on simply connected $\Omega$ has a $C^1$
logarithm, $\Bcal = e^{u + i\theta}$ with $u, \theta \in C^1$ real. The
gauge $\varphi := e^{i\theta/2}$ gives
$\Bcal' = \Bcal\, e^{-i\theta} = e^u = |\Bcal|$. The converse: a gauge
neither moves $Z(\Bcal)$ nor, by Corollary~\ref{cor:slice-charge}, the
charge, and a positive $\Bcal'$ has empty vortex set and charge $0$.
\end{proof}

\begin{remark}[The charge is pseudo-analytic]\label{rem:pa}
The title of this paper carries a claim that can now be cashed out. On
the slice, $N = \Bcal$ is the coefficient of $\bar w$ --- the term that
makes solutions of \eqref{eq:bv} pseudo-analytic in the sense of
\cite{bers}, rather than gauged solutions of the principal part alone.
Where $\Bcal$ vanishes identically on an open set, the conjugate
coupling can there be removed entirely: gauging away $\Acal$ as well
amounts to solving $L\varphi = -\Acal\,\varphi$ with $\varphi$
zero-free, which for H\"older data $\mu, \Acal$ has local $C^{1}$
solutions $\varphi = e^s$, $Ls = -\Acal$: a principal homeomorphism of
the Beltrami equation, $C^{1,\alpha}$ for H\"older $\mu$, straightens
$L$ to $\bar\partial$, and the Pompeiu integral then solves the
straightened equation \cite{vekua, bers}; at bare $C^0$ data
the solution $s$ is only
H\"older, and the reduction exits the $C^1$ gauge group --- a
regularity delicacy, not a topological one. Modulo that caveat,
$\Omega \setminus Z(N)$ is the locus where the equation is
\emph{irreducibly} pseudo-analytic --- where no gauge uncouples $w$
from $\bar w$ --- and by Proposition~\ref{prop:vortex-set} that locus
is an invariant of the equivalence class. Admissibility then reads: the
equation must be irreducibly pseudo-analytic near $\partial\Omega$.
Equations reducible to the principal part are not of charge zero; they
are \emph{chargeless}, outside the definition altogether. And a
non-zero charge asserts two things at once: the conjugate coupling is
globally irremovable, and it is topologically forced to degenerate ---
to vanish at vortices --- in the interior, the local charge $n_p$
measuring how the coupling twists around the point of its own
degeneration. Pseudo-analyticity near the boundary is what the charge
needs to exist; a non-zero charge is the topological certificate that
the pseudo-analytic character can be neither removed nor kept
non-degenerate.
\end{remark}

\begin{remark}[What the framing buys]\label{rem:mod2}
On a simply connected domain the slice and the framed class tell the
same story, as they must. The difference surfaces on multiply connected
domains, where Remark~\ref{rem:multiply} split the invariance: exact
for substitutions, mod $2$ per hole for scalings and changes of
variables. The slice cannot see
that split. Its stabilizer \eqref{eq:gauge-law} is a substitution
\emph{welded to} a scaling --- the factor
$\bar\varphi/\varphi = |\varphi|^2 \cdot \varphi^{-2}$ is precisely the
positive substitution factor $D = |\varphi|^2$ times the scaling factor
$c^2 = \varphi^{-2}$ --- so every gauge carries the winding-capable
part, and around a hole the component charges of $\Bcal$ are invariants
only in $\Z / 2\Z$. The framed class unwelds the two: the substitutions,
the only actions that mix $w$ with $\bar w$, contribute the harmless
positive factor, and the mod-$2$ loss is confined to the scalings and the
changes of variables, where it is transparent. Framing does not add invariance; it exhibits
where the invariance was always sitting.
\end{remark}

With the slice understood, the two invariants of \eqref{eq:bv} are the
mass, fed by $|\Bcal|$ and weighted by $\mu$, and the charge, wound from
$\arg \Bcal$ and blind to $\mu$. The next section shows they are as
independent as this division of labor suggests.

\section{Independence from the mass}\label{sec:independence}

Mass and charge are both invariants of the equivalence class; the
question is whether either constrains the other. The vocabulary of
Section~\ref{sec:vortices} suggests a comparison with the
Ginzburg--Landau functional, where the answer is famously negative in
the other direction: there, energy is bounded \emph{below} by the vortex
number, through the Bogomolny decomposition of the energy into a perfect
square plus a topological term \cite{jt}. Here no such bound exists, in
either direction: the joint invariant $(\Mcal, n)$ fills its whole
range.

\begin{theorem}[Independence]\label{thm:independence}
For every pair $(m, k) \in (0, \infty) \times \Z$ there is an admissible
equation on $\D$ with $\Mcal = m$ and $n = k$. Moreover, within each
charge sector the mass is unconstrained:
\[
\inf \{\, \Mcal : n = k \,\} \;=\; 0
\quad\text{and}\quad
\sup \{\, \Mcal : n = k \,\} \;=\; \infty
\qquad \text{for every } k \in \Z .
\]
\end{theorem}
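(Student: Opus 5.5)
Work on the trivial-frame slice, where Lemma~\ref{lem:realization} lets us prescribe $N$ freely. Take $(\Phi,\Psi)=(1,0)$, $\fa=0$, $\mu\equiv 0$ and $\fb=\Bcal$. Then $N=\Bcal$ and the mass density is $|\Bcal|^2\,dx\,dy$. The charge is the winding \eqref{eq:charge-B} along an enclosing curve. It then suffices to exhibit, for each $k\in\Z$, a one-parameter family of continuous fields $\Bcal$ on $\D$ that have compact zero set, winding $k$, and masses sweeping all of $(0,\infty)$.

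For the model field set $\Bcal_k(z)=z^k$ if $k\ge 0$ and $\Bcal_k(z)=\bar z^{|k|}$ if $k<0$. Each is continuous on $\overline\D$ with zero set $\{0\}$ (empty when $k=0$), hence admissible. Its winding along $|z|=1/2$ is $k$, and $\int_\D|\Bcal_k|^2=\pi/(|k|+1)\in(0,\infty)$. Now scale by a positive constant $\lambda$, setting $\fb=\lambda\Bcal_k$. The vortex set and the winding are unchanged, since $\lambda>0$ multiplies $N$ by a positive factor and does not move $\arg N$. The mass becomes $\lambda^2\pi/(|k|+1)$. As $\lambda$ ranges over $(0,\infty)$ this takes every value in $(0,\infty)$ exactly once, so choosing $\lambda=\sqrt{m(|k|+1)/\pi}$ gives $\Mcal=m$ and $n=k$. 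The infimum and supremum statements follow by letting $\lambda\to 0$ and $\lambda\to\infty$. The infimum $0$ is not attained, since $\Mcal=0$ forces $N\equiv 0$ and hence $Z(N)=\D$, which is not compact.

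The only point needing care is that the mass $\Mcal=\int_\Omega\Theta$ of \cite{mass,framed} is taken over the open domain, so it must be finite and must not depend on the enclosing-curve machinery. Here $\Bcal_k$ is bounded on $\D$, so the integral is finite. Every other ingredient is already available: admissibility ($Z(N)$ compact in $\D$), the charge computation (a direct winding, or Proposition~\ref{prop:degree}(iii) with the single vortex at $0$ of local charge $k$), and the identification $N=\Bcal$. The main, and mild, obstacle is to confirm that the normalization of $\Theta$ at $\mu=0$, $(\Phi,\Psi)=(1,0)$ is exactly $|\fb|^2\,dx\,dy$, with no extra constant. If there is a constant, it only rescales $\lambda$ and does not affect the argument.
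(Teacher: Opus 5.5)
Your proposal is correct and follows the paper's proof essentially verbatim: the trivial frame with $\mu=0$, $\fa=0$, $\fb=t z^k$ or $t\bar z^{|k|}$, mass $\pi t^2/(|k|+1)$, and the choice $t=\sqrt{m(|k|+1)/\pi}$, with $t\to 0$ and $t\to\infty$ giving the infimum and supremum. The normalization you flag is settled by the explicit density $\Theta = |N|^2(|\Phi|^2-|\Psi|^2)^{-2}(1-|\mu|^2)^{-1}\,dx\,dy$ from the introduction, which at $(\Phi,\Psi)=(1,0)$, $\mu=0$ is exactly $|\Bcal|^2\,dx\,dy$, with no extra constant.
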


\begin{proof}
By Lemma~\ref{lem:realization} it suffices to produce continuous fields
$N = \Bcal$ on $\D$, taking the trivially framed equation with
$\mu = 0$, $\fa = 0$, $\fb = \Bcal$, for which the mass is
$\Mcal = \int_\D |\Bcal|^2\, dx\, dy$ and the charge is the winding of
$\Bcal$. For $k \in \Z$ and $t > 0$ set
\[
\Bcal_{t,k}(z) \;:=\;
\begin{cases}
t\, z^{k}, & k \ge 0,\\[2pt]
t\, \bar z^{\,|k|}, & k < 0 .
\end{cases}
\]
For $k \neq 0$ the vortex set is $\{0\}$ and the charge is the winding
of $z^k$ (resp.\ $\bar z^{|k|}$) on any circle about the origin, namely
$k$; for $k = 0$ the field is zero-free of charge $0$. The mass is
\[
\Mcal(t, k) \;=\; t^2 \int_\D |z|^{2|k|}\, dx\, dy
\;=\; \frac{\pi\, t^2}{|k| + 1}\,,
\]
a bijection of $t \in (0, \infty)$ onto $(0, \infty)$ for each fixed
$k$: choose $t = \sqrt{m(|k|+1)/\pi}$. Letting $t \to 0$ and
$t \to \infty$ gives the infimum and supremum.
\end{proof}

\begin{corollary}[No Bogomolny bound]\label{cor:no-bogomolny}
There is no function $f : \Z \to [0, \infty)$, positive at some
$k \neq 0$, with $\Mcal \ge f(n)$ for all admissible equations; nor any
bound in the reverse direction.
\end{corollary}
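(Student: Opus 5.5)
The corollary follows directly from the family $\Bcal_{t,k}$ built in the proof of Theorem~\ref{thm:independence}, where $\Mcal(t,k) = \pi t^2/(|k|+1)$ and $n = k$. The plan is to argue by contradiction in each direction.

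\emph{No lower bound.} Suppose $f : \Z \to [0,\infty)$ satisfies $\Mcal \ge f(n)$ for every admissible equation and $f(k_0) > 0$ for some $k_0 \neq 0$. Take the trivially framed equation on $\D$ with $\fb = \Bcal_{t,k_0}$. It is admissible, its charge is $k_0$, and its mass is $\pi t^2/(|k_0|+1)$. Choosing $t$ with $\pi t^2/(|k_0|+1) < f(k_0)$ violates the bound. This is just the statement $\inf\{\Mcal : n = k_0\} = 0$ from Theorem~\ref{thm:independence}. The hypothesis $k_0 \neq 0$ is not even needed, since the infimum vanishes in every charge sector, including $k = 0$. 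The stated form simply mirrors the shape of the Bogomolny inequality, where the topological term vanishes at zero vortex number.

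\emph{No reverse bound.} I read ``a bound in the reverse direction'' in two ways and would dispose of both.
\begin{itemize}
\item[(a)] A bound of the charge by the mass, $|n| \le g(\Mcal)$ for some finite-valued $g : (0,\infty) \to [0,\infty)$. Fix $m$ and use $t = \sqrt{m(|k|+1)/\pi}$. This realizes mass $m$ with charge $k$ for every $k \in \Z$, so $|n|$ is unbounded on the mass-$m$ level set, and no such $g$ exists.
\item[(b)] An upper bound of the mass by the charge, $\Mcal \le h(n)$. This fails by $\sup\{\Mcal : n = k\} = \infty$.
\end{itemize}
In both cases the contradiction comes from a single fiber of the surjection $(t,k) \mapsto (\Mcal, n)$ onto $(0,\infty) \times \Z$.

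There is no real obstacle here. The only point needing care is making precise what ``bound in the reverse direction'' means, and covering both readings (a) and (b) settles that. Every equation used is already certified admissible in the proof of Theorem~\ref{thm:independence}: its vortex set is $\{0\}$ or empty, so it is compactly contained in $\D$.
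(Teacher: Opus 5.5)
Your proposal is correct and matches how the paper obtains the corollary: it has no separate proof and is read off directly from Theorem~\ref{thm:independence}. The vanishing infimum in the sector $n = k_0$ rules out any lower bound $f$, while the unbounded supremum and the realization of every pair $(m,k)$ rule out both of your readings of the reverse bound. Your side observation is also right: the restriction to $k_0 \neq 0$ only mirrors the Bogomolny shape and is not needed.
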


\begin{remark}[Why the bound fails]\label{rem:why-no-bound}
The failure is structural, and one line of scaling exposes it: the
family $\Bcal \mapsto t\,\Bcal$ leaves the vortex set and every winding
untouched while carrying the mass continuously to $0$. The
Ginzburg--Landau energy resists this scaling because it penalizes the
\emph{gradient} of the order parameter: a vortex of degree $k$ forces
the phase to turn $k$ times around it, the angular derivative cannot be
scaled away, and the Bogomolny rearrangement converts exactly that
kinetic cost into the topological term $2\pi |k|$ \cite{jt}. The
pseudo-analytic mass contains no derivative of $N$: it is a zeroth-order
$L^2$ quantity of the field itself, the derivatives having already been
spent inside $N$ --- the $L$-Wronskian is part of the field, not of the
density. Winding is therefore free of charge, so to speak: the phase of
$N$ can turn arbitrarily often at arbitrarily small mass. Mass and
charge are not two readings of one energy; they are the metric and
topological shadows of the same field, cast in independent directions.
\end{remark}

\begin{remark}\label{rem:not-complete}
Independence also delimits what the pair $(\Mcal, n)$ can do: its
fibers are enormous, and it separates nothing beyond what it measures.
Mass and charge are invariants of the class, not coordinates on it. The
Beltrami coefficient, entering $n$ only through the frame's
$L$-Wronskian and $\Mcal$ only through the conformal weight, carries
its own, separate invariant content, for which see \cite{mass, framed}.
\end{remark}

\section{Discussion}\label{sec:discussion}

\emph{A discrete invariant.} Mass and charge are invariants of opposite
temperament. The mass varies continuously with the data; the charge, by
Proposition~\ref{prop:stability}, is locally constant: it cannot move
under a perturbation of $(\Phi, \Psi)$ small in $C^1$ and of
$(\mu, \fa, \fb)$ small in $C^0$ near an enclosing curve, so it labels,
within the admissible equations, something like connected components ---
a topological quantum number of the equation. The pairing is the
familiar one from gauge theory: a continuous modulus and a discrete
charge, attached here to a single field $N$. One sector is excluded by
fiat: equations with $N \equiv 0$ --- among them the pure Beltrami
equations with trivial lower-order part, the zero-mass locus of
\cite{mass, framed} --- are not admissible, and carry no charge. The
charge is an invariant of the massive sector --- equivalently, by
Remark~\ref{rem:pa}, of the sector irreducibly pseudo-analytic near the
boundary.

\emph{Vekua's index.} The theory of \cite{vekua} attaches an integer to
a different winding: for the Riemann--Hilbert problem
$\operatorname{Re}(\bar\lambda w) = \gamma$ on $\partial\Omega$, with
$|\lambda| = 1$, the index of the \emph{boundary condition} --- the
winding of $\lambda$ --- controls the dimension counts of solvability.
That index is external: it belongs to the datum $\lambda$ on the
boundary, and can be prescribed freely over a fixed equation. The
pseudo-analytic charge is internal: it belongs to the coefficients, is
pinned by the equivalence class, and exists before any boundary problem
is posed. The two windings live on the same circle, and it would be
surprising if they never spoke: one expects the solvability theory of
boundary value problems for admissible equations to see both integers
--- and, more finely, to see the vortices of $N$, near which the
equation degenerates in a gauge-irremovable way
(Proposition~\ref{prop:vortex-set}). We leave the interaction of $n$
with Vekua's index, and the local theory of solutions at a vortex of
prescribed local charge, as open problems.

\emph{Multiply connected domains, sharply.} The mod-$2$ loss of
Remark~\ref{rem:multiply} is not an artifact of proof. On the annulus
$\{1 < |z| < 2\}$ the scaling $c(z) = z$ is continuous and zero-free,
and shifts the component charge around the hole by
$2\operatorname{w}(z, \gamma) = 2$; the inversion $F(z) = 1/z$, from
$\{1 < |z| < 2\}$ onto $\{1/2 < |w| < 1\}$, everts the hole and
realizes the negation $n \mapsto 2 - n$ of Remark~\ref{rem:multiply}:
pulling back the field $N \equiv 1$ (with $\mu = 0$, so that
$\rho = -\bar z^{\,2}$ and $\tilde N = -\bar z^{\,-2}$) carries
component charge $0$ to $2$, and pulling back $N(w) = \bar w^{\,-2}$
(so that $\tilde N = \bar z^{\,2}/(-\bar z^{\,2}) = -1$) carries $2$
to $0$. The component charges genuinely live in $\Z / 2\Z$, while the
total charge, and the component charges under substitutions alone, are
exact. Nothing sharper is available.

\emph{Measurable regularity.} The companion paper \cite{framed} pushes
the mass to measurable data: $\mu$ measurable and locally elliptic, the
frame in $W^{1,2}_{\mathrm{loc}} \cap L^\infty_{\mathrm{loc}}$, the
changes of variables quasiconformal. The mass survives there because it
is an integral: $N$ lands in $L^2_{\mathrm{loc}}$ and $\Theta$ asks for
no more. The charge asks for more. A winding needs trace data on
curves, and an $L^2_{\mathrm{loc}}$ field has none; at full measurable
regularity the charge, as defined, does not exist. Two regimes below
full generality do support it. If the data are measurable in the bulk
but continuous on a collar of the boundary, Definition~\ref{def:charge}
applies verbatim, enclosing curves being creatures of the collar. And
if $N$ happens to lie in $W^{1,2}$ with $|N|$ bounded below on an
annular neighborhood of an enclosing curve, then $N/|N|$ is there a
Sobolev map to the circle, and the degree theory of VMO maps
\cite{bn} supplies the winding along almost every nearby curve,
constant among them. What we do not know is the invariance under
quasiconformal changes of variables in either regime: the weight $\rho$
of \eqref{eq:rho-def} is then built from the a.e.-derivatives of $F$
and lies only in $L^2_{\mathrm{loc}}$, and our proof that $\rho$ cannot
wind spends continuity. One expects the conclusion to persist ---
$\rho$ is the derivative weight of a homeomorphism, and heuristically
carries no winding to spend --- but we record it as a conjecture:
\emph{the pseudo-analytic charge, where defined, is invariant under
quasiconformal changes of variables.}

\emph{Outlook.} The charge completes, at the topological end, the
program of \cite{mass, framed}: every invariant constructed there is an
integral of a density, and the present paper shows the same field $N$
also carries an integer. Two directions suggest themselves. Downward,
into the local theory: classify the behavior of solutions at a vortex
of local charge $n_p$, where the similarity principle governs the
solution but not the coefficient. Upward, out of the plane: in
quaternionic extensions of the theory the numerator field should become
a vector field on a domain in higher dimension, and the winding a
mapping degree into a sphere; whether the exactness of the substitution
law --- the positive factor $D$, on which everything here rests ---
survives the loss of commutativity is precisely the question.

\subsection*{Use of Generative AI Tools}
\medskip

The author discloses the use of Anthropic's Claude (Claude Fable 5,
accessed through the Claude.ai mobile interface in July 2026) in the
preparation of this manuscript. The tool was used as follows:

\begin{enumerate}
\item[(i)] \emph{Exploratory dialogue.} The organization of the paper
--- in particular the definition of the charge by enclosing curves, its
identification with the Brouwer degree
(Proposition~\ref{prop:degree}), the invariance of the vortex set
(Proposition~\ref{prop:vortex-set}), the exact/mod-$2$ dichotomy on
multiply connected domains (Remark~\ref{rem:multiply}), and the
independence construction of Theorem~\ref{thm:independence} --- emerged
in iterative research sessions, building on the companion papers
\cite{mass, framed}.

\item[(ii)] \emph{Drafting and revision of prose.} The manuscript was
drafted in iterative dialogue; all claims and their precise wording
were reviewed by the author.
\end{enumerate}

The author takes full responsibility for the correctness, accuracy,
originality, and integrity of all content.

\subsection*{Disclosure of interest}

The author reports there are no competing interests to declare.

\end{document}